\newcommand{\cE}{\mathcal{E}}
\newcommand{\cL}{\mathcal{L}}
\newcommand{\cO}{\mathcal{O}}
\newcommand{\der}[2]{\frac{\partial {#1}}{\partial {#2}}}
\renewcommand{\d}{\mathrm{d}}
\newcommand{\N}{\mathbb{N}}
\newcommand{\R}{\mathbb{R}}
\newcommand{\Z}{\mathbb{Z}}
\renewcommand{\mod}{\mathrm{mod}}
\DeclareMathOperator{\D}{D}
\newtheorem{theorem}{Theorem}
\newtheorem{definition}{Definition}
\newtheorem{proposition}{Proposition}
\theoremstyle{definition}
\newtheorem{example}{Example}
\title{Contact variational integrators}
\author[1]{Mats Vermeeren}
\author[2]{Alessandro Bravetti}
\author[3]{Marcello Seri}
\affil[1]{Technische Universität Berlin, Germany, \authorcr\normalsize \texttt{vermeeren@math.tu-berlin.de}}
\affil[2]{Centro de Investigación en Matemáticas (CIMAT), Guanajuato, Mexico, \authorcr\normalsize \texttt{alessandro.bravetti@cimat.mx}}
\affil[3]{Bernoulli Institute for Mathematics, Computer Science and Artificial Intelligence, \authorcr\small Groningen, The Netherlands, 
\authorcr\normalsize \texttt{m.seri@rug.nl}}
\date{}
\begin{document}

\maketitle
\abstract{
\noindent
We present geometric numerical integrators for contact flows that stem from a discretization of Herglotz’ variational principle. First we show that the resulting discrete map is a contact transformation and that any contact map can be derived from a variational principle. Then we discuss the backward error analysis of our variational integrators, including the construction of a modified Lagrangian. Throughout the paper we use the damped harmonic oscillator as a benchmark example to compare our integrators to their symplectic analogues.

\medskip
\noindent
\textbf{Keywords:} contact geometry, geometric integrators, Herglotz'  variational principle

\noindent
\textbf{MSC2010:} 65D30, 34K28, 34A26
}

%\tableofcontents
\section{Introduction}

The last few years have seen a rise in importance of the field of contact geometry. 
As the theory gets more relevant to scientific applications, 
there is an increasing demand for the development of numerical integrators preserving the contact structure.

Contact geometry appears in fluid dynamics \cite{etnyre2000contact,kholodenko2013applications,roulstone1994hamiltonian}, thermodynamics~\cite{bravetti2018contact, grmela2014contact, grmela1997dynamics, Mrugala1978, vanderschaft2018geomthermo},
statistical physics \cite{bravetti2016thermostat}, statistics \cite{betancourt2014adiabatic,goto2018contact}, quantum mechanics \cite{bravetti2017exact,ciaglia2018contact,fitzpatrick2011geometric,herczeg2018contact,rajeev2008quantization}, gravity~\cite{lazo2017action}, information geometry \cite{barbaresco2013information,barbaresco2016geometric,goto2016contact}, shape dynamics \cite{sloan2018dynamical}, biology \cite{bravetti2018thermodynamics, guha2017generalized}, optimal control \cite{jozwikowski2016contact,ohsawa2015contact}, and integrable systems \cite{Boyer,jovanovic2012noncommutative,jovanovic2015contact,sergyeyev2018integrable,sergyeyev2018new}. One of the applications that has recently attracted a lot of attention is the classical mechanics of dissipative systems \cite{abraham1978foundations,Arnold,bravetti2017contact,deleon2017cos,de2019singular,gaset2019new,liu2018contact}, on which we will focus in this paper.

Contact geometry can be thought of as an odd-dimensional analogue of symplectic geometry. A contact manifold is a pair $(M,\xi)$ where $M$ is an $(2n+1)$-dimensional manifold and $\xi\subset{TM}$ is a contact structure, that is, a maximally non-integrable distribution of hyperplanes. Locally, such distribution is given by the kernel of a one form $\eta$ satisfying $\eta \wedge (\d \eta)^n \neq 0$ (see e.g.~\cite{geiges2008introduction} for more details). The 1-form $\eta$ is called the contact form. Note that
if we multiply $\eta$ by a non-vanishing function we obtain another 1-form giving rise to the same contact structure $\xi$. 
This means that in order to preserve $\xi$ we can act on the 1-form by conformal transformations. Thus we must keep in mind that $\eta$ is just a representative element in an equivalence class of 1-forms describing the same $\xi$.

Once we fix $\eta$, Darboux's theorem for contact manifolds states that for any point on $M$ there exists a neighbourhood with local coordinates $(x_1,\ldots,x_n,p_1,\ldots,p_n,z)$ such that the contact 1-form can be written as
\[ \eta =  \d z-\sum_i p_i \,\d x_i . \]
Throughout the paper we will write $\d z-p\,\d x$ as a short form for $\eta$.

Moreover, given $\eta$, to any smooth function $H:M\rightarrow\mathbb{R}$ we can associate a contact Hamiltonian vector field $X_H$, defined by
\[ \mathcal{L}_{X_H} \eta = f_H \eta \qquad \text{and} \qquad \eta(X_H) = - H, \]
where $\mathcal{L}$ is the Lie derivative, $f_H=-R_{\eta}(H)$ and $R_{\eta}$ is the Reeb vector field corresponding to $\eta$~\cite{geiges2008introduction}.
In Darboux coordinates the flow of $X_H$ is given by
\[
\begin{dcases}
    \dot{x} = \der{H}{p} \\
    \dot{p} = -\der{H}{x} - p \der{H}{z} \\
    \dot{z} = p \der{H}{p} - H ,
\end{dcases}
\]
where $x = (x_1,\ldots,x_n)$, $p = (p_1,\ldots,p_n)$ and the standard scalar product is assumed where two vectors are multiplied. The flow of a contact Hamiltonian system preserves the contact structure, but it does not preserve the Hamiltonian. Instead we have
\[ \frac{\d H}{\d t} = -H \der{H}{z} . \]
For example a Hamiltonian of the form $H = \frac{1}{2} p^2 + V(x) + \alpha z$ leads to the equations of motion of a damped mechanical system:
\[
\begin{dcases}
    \dot{x} = p \\
    \dot{p} = -V'(x) - \alpha p \\
    \dot{z} = p^2 - H .
\end{dcases}
\]

Another remarkable similarity with standard symplectic Hamiltonian systems is the fact that contact Hamiltonian systems have an associated variational principle, which is due to Herglotz~\cite{georgieva2003generalized,herglotz1930vorlesungen} (see also~\cite{cannarsa2018herglotz,wang2016implicit}), and a corresponding theory of generating functions~\cite{bravetti2017contact}.

Geometric integrators for contact Hamiltonian systems have been studied in~\cite{Feng2010} by 
exploiting their symplectification and the corresponding generating functions. However, a variational approach is missing.
So far,~\cite{Feng2010} has received little attention, most likely because the authors did not discuss any physically relevant examples.

In this paper we present a natural way to develop numerical integrators for contact systems by exploiting Herglotz' variational principle. Our result furnishes a variational scheme to integrate contact Hamiltonian systems in such a way that the contact structure is preserved. Furthermore, in analogy with the theory of symplectic numerical integrators, we find that modified equations for our method are again contact systems. This suggests that the numerical results will remain very close to a ``nearby'' contact system for very long times.

The purpose of this paper is to lay the theoretical groundwork of contact integrators and to show their promise with some simple numerical experiments. To keep the discussion direct and self-contained, the main results will be presented only for contact systems without an explicit time dependence, with a section showing how the method is easily extended to general contact systems by means of an explanatory example. Furthermore, some interesting discussions and developments are postponed to future works. These include the extension to a more general sub-Riemannian setting and the comparison with~\cite{Feng2010} and other known approaches to some physically relevant examples.

It is important to remark at this point that not every kind of dissipative system can be written with a contact Hamiltonian of the form above, as the geometry underlying the construction will enforce some structure. For more details on this, we refer the readers to the thorough investigation in \cite{ciaglia2018contact,de2019singular,gaset2019new}. Nevertheless, the contact Hamiltonian structure allows to describe a large number of physically relevant systems, including most of the ones from the literature cited at the beginning of this introduction.

The paper is structured as follows. In section \ref{sec:hvp} we set the stage introducing Herglotz' variational principle and some of its relevant properties. In section \ref{sec:dhvp} we develop the central idea of the paper defining a contact integrator obtained from a discretization of Herglotz' variational principle. In section \ref{sec:bea} we study the modified equations for the contact integrators introduced in section \ref{sec:dhvp}, showing that up to truncation errors, the numerical solutions are interpolated by contact systems. In section \ref{sec:tdep} we present an example to show how the ideas can be extended in a straightforward fashion to systems with an explicit time dependence. Finally, in section \ref{sec:num} we illustrate by numerical examples how our integrators perform on contact systems in comparison to symplectic integrators.

\section{Herglotz' variational principle}\label{sec:hvp}

The usual variational principle in mechanics looks for a curve $x:[0,T] \rightarrow Q$ in configuration space $Q$, such that the action integral
\begin{equation}\label{classical-action}
S = \int_0^T \cL(t,x(t),\dot{x}(t)) \, \d t
\end{equation}
is critical with respect to variations of $x$ that vanish at the endpoints, where $\cL: \R \times TQ \rightarrow \R$ is a given Lagrange function. Herglotz \cite{herglotz1930vorlesungen} generalized this variational principle by defining the action in terms of a differential equation instead of an integral.
\begin{definition}
Let $\cL: \R \times TQ \times \R \rightarrow \R$. Given a curve $x:[0,T] \rightarrow Q$, define the function $z:[0,T] \rightarrow \R$ by an initial condition $z(0) = z_0$ and the differential equation
\begin{equation}\label{z-differential}
\dot{z}(t) = \cL(t,x(t),\dot{x}(t),z(t)) .
\end{equation}
The curve $x$ is a \emph{solution to Herglotz' variational principle} with initial condition $z_0$ if every variation of $x$ that vanishes at the boundary of $[0,T]$ leaves the action $z(T)$ invariant.
\end{definition}
If $\cL$ does not depend on $z$, then the differential equation \eqref{z-differential} is solved by the integral \eqref{classical-action} and Herglotz' variational principle reduces to the classical variational principle. A modern discussion of Herglotz' variational principle can be found for example in \cite{georgieva2003generalized}.

\begin{proposition}\label{prop-gEL}
A (sufficiently regular) curve $x$ is a solution to Herglotz' variational principle {if and only if} it satisfies the \emph{generalized Euler-Lagrange equations}
\begin{equation}\label{generalized-EL}
\der{\cL}{x} - \frac{\d}{\d t} \der{\cL}{\dot{x}} + \der{\cL}{z}\der{\cL}{\dot{x}} = 0 ,
\end{equation} 
where $z$ is given in terms of $x$ by Equation \eqref{z-differential}.
\end{proposition}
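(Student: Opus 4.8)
The plan is to compute the first variation of the action $z(T)$ directly, the only complication being that $z$ depends on the curve $x$ not through an explicit integral but through the differential equation \eqref{z-differential}. I would fix a variation $\delta x:[0,T]\to Q$ with $\delta x(0)=\delta x(T)=0$, consider the family $x_\varepsilon = x + \varepsilon\,\delta x$, and let $z_\varepsilon$ be the solution of \eqref{z-differential} for $x_\varepsilon$ with the same initial condition $z_\varepsilon(0)=z_0$. Setting $\zeta(t) = \der{z_\varepsilon(t)}{\varepsilon}\big|_{\varepsilon=0}$, the common initial condition forces $\zeta(0)=0$, and solving the variational principle amounts to demanding $\zeta(T)=0$ for every admissible $\delta x$.

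The key step is to differentiate the defining equation \eqref{z-differential} with respect to $\varepsilon$ at $\varepsilon=0$. Because the right-hand side depends on $z$ itself, this does not give $\zeta$ by a plain integration but rather the linear inhomogeneous ODE
\begin{equation*}
\dot\zeta - \der{\cL}{z}\,\zeta = \der{\cL}{x}\,\delta x + \der{\cL}{\dot x}\,\dot{\delta x},
\end{equation*}
where all partial derivatives of $\cL$ are evaluated along the unperturbed solution. This coupling of $\zeta$ to itself through $\der{\cL}{z}$ is exactly what I expect to be the main obstacle, and the device that resolves it is an integrating factor: writing $E(t) = \exp\!\left(-\int_0^t \der{\cL}{z}\,\d s\right)$, the equation becomes $\frac{\d}{\d t}(E\zeta) = E\big(\der{\cL}{x}\,\delta x + \der{\cL}{\dot x}\,\dot{\delta x}\big)$, which I can integrate from $0$ to $T$ using $\zeta(0)=0$ to express $E(T)\zeta(T)$ as a single integral.

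From there the argument is the classical one. I would integrate the $\dot{\delta x}$ term by parts; the boundary contribution $\big[E\,\der{\cL}{\dot x}\,\delta x\big]_0^T$ vanishes since $\delta x$ vanishes at the endpoints. Using $\dot E = -\der{\cL}{z}E$ when differentiating the product $E\,\der{\cL}{\dot x}$, the factor $\der{\cL}{z}$ reappears with the correct sign and assembles into the extra term $\der{\cL}{z}\der{\cL}{\dot x}$ that distinguishes \eqref{generalized-EL} from the ordinary Euler--Lagrange equations. This yields
\begin{equation*}
E(T)\,\zeta(T) = \int_0^T E\left(\der{\cL}{x} - \frac{\d}{\d t}\der{\cL}{\dot x} + \der{\cL}{z}\der{\cL}{\dot x}\right)\delta x \,\d t.
\end{equation*}
Finally, since $E(T)\neq 0$ and $E(t)>0$ throughout, requiring $\zeta(T)=0$ for all $\delta x$ is equivalent, by the fundamental lemma of the calculus of variations, to the vanishing of the bracketed expression, which is precisely \eqref{generalized-EL}. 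Both implications of the ``if and only if'' come out of this single computation, so no separate converse argument is needed.
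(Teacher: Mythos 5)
Your proposal is correct and follows essentially the same route as the paper's proof: the paper likewise linearizes Equation \eqref{z-differential} to get the inhomogeneous ODE for $\delta z$, solves it with the integrating factor $e^{-B(t)}$ where $B(t)=\int_0^t \der{\cL}{z}\,\d\tau$ (your $E(t)=e^{-B(t)}$), integrates the $\delta\dot{x}$ term by parts so that the derivative of the integrating factor produces the extra term $\der{\cL}{z}\der{\cL}{\dot{x}}$, and concludes both directions at once from the positivity of the exponential prefactor and the arbitrariness of $\delta x$. The only differences are notational.
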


Note that the Euler-Lagrange equations are not linear in $\cL$, hence they are not invariant under scaling of the Lagrangian $\cL$.

\begin{proof}[Proof of Proposition \ref{prop-gEL}]
Consider an arbitrary variation $\delta x$ of $x$, vanishing at the endpoints, and the corresponding induced variation $\delta z$ of $z$. Since the initial condition $z(0) = z_0$ is independent of $x$ we have $\delta z(0) = 0$. From Equation \eqref{z-differential} it follows that
\[ \delta \dot{z} = \der{\cL}{x} \delta x + \der{\cL}{\dot{x}} \delta \dot{x} + \der{\cL}{z} \delta z .\]
If we set 
\[ A(t) = \der{\cL}{x} \delta x + \der{\cL}{\dot{x}} \delta \dot{x} \qquad \text{and} \qquad B(t) = \int_0^t \der{\cL}{z}(\tau) \,\d \tau, \]
this differential equation reads
\[  \delta \dot{z}(t) = A(t) + \frac{\d B(t)}{\d t} \delta z \]
and its solution is
\[ \delta z(t) = e^{B(t)} \left[ \int_0^t A(\tau) e^{-B(\tau)} \,\d \tau + \delta z(0) \right]. \]
Plugging in the expression for $A$ and noting that $\frac{\d B}{\d t} = \der{\cL}{z}$ we find
\begin{align}
\delta z(T) &= e^{B(T)} \left[ \int_0^T  \left( \der{\cL}{x} \delta x + \der{\cL}{\dot{x}} \delta \dot{x} \right) e^{-B(\tau)} \, \d \tau  + \delta z(0) \right] \notag \\
\begin{split}
&= e^{B(T)} \bigg[ \int_0^T  \left( \der{\cL}{x} - \frac{\d}{\d t} \der{\cL}{\dot{x}} + \der{\cL}{z}\der{\cL}{\dot{x}} \right)  \delta x \,  e^{-B(\tau)} \, \d \tau \\
&\hspace{2cm} + \der{\cL}{\dot{x}}(T) \delta x(T) \,  e^{-B(T)}- \der{\cL}{\dot{x}}(0) \delta x(0)  + \delta z(0) \bigg] .
\end{split}\label{general-variation}
\end{align}
The boundary terms vanish because $\delta x(0) = \delta x(T) = \delta z(0) = 0$. Since $\delta x$ is otherwise arbitrary, the action $z(T)$ is critical if and only if Equation \eqref{generalized-EL} holds.
\end{proof}

If the classical variational principle is satisfied on the interval $[0,T]$ it is automatically satisfied on any subinterval.  For the Herglotz variational principle this property is not obvious from the definition, but it still follows from the generalized Euler-Lagrange equations.

\begin{proposition}
If $x:[0,T] \rightarrow Q$ solves the Herglotz variational principle with initial condition $z_0$, then for any interval $[a,b] \subset [0,T]$, the restriction $x|_{[a,b]}$ solves the Herglotz variational principle with initial condition $z(a)$.
\end{proposition}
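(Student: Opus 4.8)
The plan is to reduce everything to Proposition \ref{prop-gEL}, which characterizes solutions of Herglotz' variational principle by the purely pointwise generalized Euler--Lagrange equations \eqref{generalized-EL}. The key observation is that the endpoints of the interval enter the problem only through the auxiliary function $z$, so once I check that $z$ behaves consistently under restriction, the equivalence in Proposition \ref{prop-gEL} does all the remaining work.

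First I would apply the forward implication of Proposition \ref{prop-gEL} to $x$ on $[0,T]$: since $x$ solves Herglotz' principle with initial condition $z_0$, it satisfies \eqref{generalized-EL} at every $t \in [0,T]$, where $z$ is the solution of \eqref{z-differential} with $z(0) = z_0$. In particular, \eqref{generalized-EL} holds at every $t \in [a,b]$.

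Next I would verify that the function $z$ appearing in these equations on $[a,b]$ is exactly the one prescribed by Herglotz' principle on $[a,b]$ with initial condition $z(a)$. The restriction $z|_{[a,b]}$ clearly satisfies the differential equation \eqref{z-differential} and takes the value $z(a)$ at $t=a$; by uniqueness of solutions of \eqref{z-differential} (which is where the regularity assumption on $\cL$, e.g.\ Lipschitz continuity in $z$, is used) it is therefore the \emph{unique} function associated to $x|_{[a,b]}$ and the initial condition $z(a)$. Hence $x|_{[a,b]}$ satisfies \eqref{generalized-EL} on $[a,b]$ with the correct $z$, and the converse implication of Proposition \ref{prop-gEL}, applied on the interval $[a,b]$, shows that $x|_{[a,b]}$ solves Herglotz' variational principle with initial condition $z(a)$.

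The only genuine obstacle is the consistency of $z$ just described: a priori the action $z(b)$ could depend on the entire history of the curve on $[0,a]$, which is precisely what makes this statement nontrivial for Herglotz' principle, as the authors remark. This dependence is entirely encoded in the single number $z(a)$, and it is the uniqueness of the initial value problem for \eqref{z-differential} that guarantees fixing this number suffices. Alternatively, one could bypass Proposition \ref{prop-gEL} and repeat its variational computation directly on $[a,b]$: for a variation $\delta x$ of $x|_{[a,b]}$ vanishing at $a$ and $b$, the analogue of formula \eqref{general-variation} with $B(t) = \int_a^t \der{\cL}{z}\,\d\tau$ expresses $\delta z(b)$ as an integral of the left-hand side of \eqref{generalized-EL} against $\delta x$, plus boundary terms at $a$ and $b$; the boundary terms vanish since $\delta x(a)=\delta x(b)=0$ and $\delta z(a)=0$ (the initial condition $z(a)$ being fixed), and the integral vanishes since \eqref{generalized-EL} holds, yielding $\delta z(b)=0$.
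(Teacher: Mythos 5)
Your proposal is correct and follows essentially the same route as the paper: the paper's (two-line) proof likewise invokes the equivalence of Proposition \ref{prop-gEL} in both directions, noting that criticality on $[0,T]$ gives the generalized Euler--Lagrange equations \eqref{generalized-EL} everywhere, hence on $[a,b]$, hence criticality there. Your only addition is to spell out the consistency of $z$ under restriction (uniqueness of the initial value problem for \eqref{z-differential} with datum $z(a)$), a detail the paper leaves implicit.
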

\begin{proof}
If $x$ is critical on $[0,T]$ then the generalized Euler-Lagrange equations are satisfied everywhere on this interval. In particular, they hold on $[a,b]$, hence $x$ is critical on $[a,b]$.
\end{proof}

In the following we will assume that the Lagrangian is regular, i.e.\@ $\left| \der{^2 \cL}{\dot{x}^2} \right| \neq 0$.
Then the generalized Euler-Lagrange equations can be written explicitly as a second order ODE. Together with the evolution of $z$ we find the system of ODEs
\begin{align*}
\ddot{x} &= \left(\der{^2 \cL}{\dot{x}^2} \right)^{-1} \left( \der{\cL}{x} - \der{^2 \cL}{\dot{x} \partial x} \dot{x} - \der{^2 \cL}{\dot{x} \partial z} \cL + \der{\cL}{z}\der{\cL}{\dot{x}} \right) , \\
\dot{z} &= \cL .
\end{align*}

An important aspect of the Herglotz variational principle is that the energy is not conserved (unless the Lagrangian is independent of $z$). Instead we find a differential equation governing its evolution.

\begin{proposition}
If the Lagrangian does not explicitly depend on time, then the energy $E = \der{\cL}{\dot{x}} \dot{x} - \cL$ satisfies the differential equation
\begin{equation}\label{energy-diffeqn}
\dot{E} = \der{\cL}{z} E 
\end{equation}
\end{proposition}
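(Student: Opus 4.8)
The plan is to differentiate $E = \der{\cL}{\dot{x}} \dot{x} - \cL$ directly with respect to time and simplify using the two facts already at hand: the constitutive relation $\dot{z} = \cL$ from Equation~\eqref{z-differential}, and the generalized Euler--Lagrange equations~\eqref{generalized-EL}. Since $\cL$ carries no explicit time dependence, its total time derivative expands as $\frac{\d \cL}{\d t} = \der{\cL}{x}\dot{x} + \der{\cL}{\dot{x}}\ddot{x} + \der{\cL}{z}\dot{z}$, and I would immediately replace $\dot{z}$ by $\cL$ in the last term.

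Carrying out the differentiation gives
\[ \dot{E} = \left( \frac{\d}{\d t}\der{\cL}{\dot{x}} \right) \dot{x} + \der{\cL}{\dot{x}} \ddot{x} - \frac{\d \cL}{\d t} . \]
The crucial simplification is that the acceleration terms cancel: the $\der{\cL}{\dot{x}}\ddot{x}$ produced by differentiating $\der{\cL}{\dot{x}}\dot{x}$ is exactly matched by the $\der{\cL}{\dot{x}}\ddot{x}$ appearing inside $\frac{\d\cL}{\d t}$. This is precisely where the absence of explicit time dependence is used to obtain this clean form. After cancellation one is left with
\[ \dot{E} = \left( \frac{\d}{\d t}\der{\cL}{\dot{x}} - \der{\cL}{x} \right) \dot{x} - \der{\cL}{z}\cL . \]

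At this point the single nontrivial step is to invoke the generalized Euler--Lagrange equation, rearranged into $\frac{\d}{\d t}\der{\cL}{\dot{x}} - \der{\cL}{x} = \der{\cL}{z}\der{\cL}{\dot{x}}$. Substituting this into the bracket yields $\dot{E} = \der{\cL}{z}\der{\cL}{\dot{x}}\dot{x} - \der{\cL}{z}\cL$, and factoring out $\der{\cL}{z}$ recovers exactly $\der{\cL}{z}\left( \der{\cL}{\dot{x}}\dot{x} - \cL \right) = \der{\cL}{z} E$, as claimed. I do not anticipate any genuine obstacle here: the computation is short and direct. The only point demanding care is the bookkeeping of which terms cancel and the sign conventions; the conceptual content is entirely carried by the two substitutions (the relation $\dot{z} = \cL$ and the generalized Euler--Lagrange equations), with the vanishing of the explicit time dependence guaranteeing that the acceleration terms drop out cleanly.
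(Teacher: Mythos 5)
Your proof is correct, but it takes a genuinely different route from the paper's. You differentiate $E = \der{\cL}{\dot{x}}\dot{x} - \cL$ directly, cancel the acceleration terms, and substitute the generalized Euler--Lagrange equation together with $\dot z = \cL$; this is a short, self-contained computation whose only inputs are Proposition~\ref{prop-gEL} and Equation~\eqref{z-differential}. The paper instead argues variationally: it applies the general variation formula~\eqref{general-variation} to the time-shift variation $\delta x = \dot x$, $\delta z = \dot z$ (using that criticality on $[0,T]$ restricts to $[0,t]$), obtaining the \emph{integrated} identity
\[ \der{\cL}{\dot{x}}(t)\,\dot{x}(t) - \dot{z}(t) = e^{B(t)} \left( \der{\cL}{\dot{x}}(0)\,\dot{x}(0) - \dot{z}(0) \right), \qquad B(t) = \int_0^t \der{\cL}{z}(\tau)\,\d\tau, \]
from which \eqref{energy-diffeqn} follows by differentiation. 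Your approach buys elementarity and avoids re-deriving anything beyond the Euler--Lagrange equations; the paper's Noether-style argument buys the stronger integrated form at no extra cost, makes the exponential factor $e^{B(t)}$ --- the same conformal factor \eqref{conformalfactor} appearing in the contactomorphism result --- manifest, and reuses machinery already established. One small expository quibble: the cancellation of the two $\der{\cL}{\dot{x}}\ddot{x}$ terms is automatic and has nothing to do with time-independence; the absence of explicit time dependence is used only to ensure $\frac{\d\cL}{\d t}$ contains no extra $\der{\cL}{t}$ term (which would otherwise add $-\der{\cL}{t}$ to the right-hand side of \eqref{energy-diffeqn}).
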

\begin{proof}
Consider a uniform time-shift of the critical curve $x$ and the function $z$. Then $\delta x = \dot{x}$ and $\delta z = \dot{z}$. If the Lagrangian does not explicitly depend on time, it follows from Equation \eqref{general-variation} that 
\[ \der{\cL}{\dot{x}}(t) \dot{x}(t) - \dot{z}(t) = e^{B(t)} \left( \der{\cL}{\dot{x}}(0) \dot{x}(0) - \dot{z}(0) \right) , \]
for any $t \in [0,T]$ because criticality on $[0,T]$ implies criticality on the subinterval $[0,t]$. It follows that $E = \der{\cL}{\dot{x}} \dot{x} - \dot{z}$ satisfies Equation \eqref{energy-diffeqn}.
\end{proof}

The usual argument that Lagrangian flows are symplectic, as presented for example in \cite[Section 1.2]{marsden2001discrete}, can be extended to show that flows of Herglotz' variational principle are contact transformations.

\begin{proposition}
Let $M = TQ \times \R$ with local coordinates $(x,\dot{x},z)$.
The flow $F:\R \times M \rightarrow M: (t,x,\dot{x},z) \mapsto F^t(x,\dot{x},z)$ of the generalized Euler-Lagrange equations consists of contact transformations $F^t$ with respect to the 1-form 
\[ \d z - p \,\d x , \]
where $p = \der{\cL}{\dot{x}}$.
\end{proposition}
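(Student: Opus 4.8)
The plan is to adapt the Marsden–West computation to the Herglotz setting by reading off the contact condition directly from the first variation formula \eqref{general-variation}. Recall that $F^t$ is a contact transformation precisely when $(F^t)^* \eta = \lambda\,\eta$ for some nowhere-vanishing function $\lambda$, where $\eta = \d z - p\,\d x$ and $p = \der{\cL}{\dot x}$. So the goal is to compute the pullback of $\eta$ along the flow and to verify that it is a nonvanishing multiple of $\eta$.

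First I would restrict attention to variations $(\delta x, \delta \dot{x}, \delta z)$ that arise from varying the initial data, so that each varied curve is again a solution of the generalized Euler–Lagrange equations. These are exactly the tangent vectors to $M$ at the initial point, pushed forward by $F^t$. Because each varied curve is a solution, the integral term in \eqref{general-variation}, whose integrand is the left-hand side of \eqref{generalized-EL}, vanishes identically, and what remains of \eqref{general-variation} is only the boundary contribution.

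Next I would rearrange this boundary contribution. With $B(t) = \int_0^t \der{\cL}{z}\,\d\tau$, the surviving terms give
\[
\delta z(T) - p(T)\,\delta x(T) = e^{B(T)}\bigl( \delta z(0) - p(0)\,\delta x(0) \bigr).
\]
The left-hand side is $\eta$ evaluated at the endpoint on the pushed-forward variation, that is $\bigl((F^T)^* \eta\bigr)\bigl(\delta x(0),\delta\dot{x}(0),\delta z(0)\bigr)$, while the right-hand side is $e^{B(T)}$ times $\eta$ at the initial point. Since the variation is arbitrary, this reads $(F^T)^* \eta = e^{B(T)}\,\eta$, and as $e^{B(T)} > 0$ the conformal factor is nowhere vanishing, so each $F^T$ is a contact transformation.

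The main obstacle I anticipate is conceptual rather than computational: one must justify that the variations appearing in \eqref{general-variation} can be taken to be genuine tangent vectors to $M$ transported by the flow, so that the surviving boundary expression really is the pullback of $\eta$. Concretely, this requires that varying $(x(0),\dot{x}(0),z(0))$ produces a one-parameter family of solutions whose derivative realizes any prescribed $(\delta x(0),\delta\dot{x}(0),\delta z(0))$, which follows from the regularity of $\cL$ and smooth dependence on initial conditions; and, crucially, that under this identification $\delta z(0)$ is an \emph{independent} coordinate variation rather than being forced to vanish as it was in the derivation of the Euler–Lagrange equations. Once this identification is in place, the rest is the direct reading-off described above.
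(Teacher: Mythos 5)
Your proposal is correct and takes essentially the same route as the paper's own proof: on solutions the integral term in \eqref{general-variation} vanishes, and rearranging the surviving boundary terms yields exactly $\left(F^t\right)^*(\d z - p\,\d x) = e^{B(t)}\,(\d z - p\,\d x)$ with the nowhere-vanishing conformal factor $e^{B(t)}$. The justification you add at the end --- that variations of the initial data $(x(0),\dot{x}(0),z(0))$, with $\delta z(0)$ now free rather than constrained to vanish, realize arbitrary tangent vectors transported by the flow --- is precisely the identification the paper makes implicitly when it writes $F^t_* v$ for the induced endpoint variation.
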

\begin{proof}
On solutions of the generalized Euler-Lagrange equations, the value of $z(t)$ is uniquely defined by the initial values $x(0)$, $\dot{x}(0)$, and $z(0)$.
Any variation 
\[ v = (\delta x(0), \delta \dot{x}(0), \delta z(0)) \in T_{(x(0),\dot{x}(0),z(0))} M \]
of the initial data induces a variation 
\[ F^t_* v = (\delta x(t), \delta \dot{x}(t), \delta z(t)) \in T_{(x(t),\dot{x}(t),z(t))} M \]
at the endpoint, where $F^t_*: TM \rightarrow TM$ denotes the pushforward of $F^t$.

Since we are working on solutions of the generalized Euler-Lagrange equations, the integrand in Equation \eqref{general-variation} vanishes and only the boundary terms remain. They can be written as
\[ \d z (F^t_* v) 
= e^{B(t)} \left[ p(t) e^{-B(t)} \, \d x (F^t_* v) -  p(0) \, \d x(v) + \d z(v) \right], \]
where $p = \der{\cL}{\dot{x}}$. It follows that
\[ \left(F^t\right)^* (\d z - p \, \d x) = e^{B(t)} ( \d z - p \, \d x ),
\]
where $\left(F^t\right)^* : T^*M \rightarrow T^*M$ denotes the pullback of $F^t$.
Hence the flow consists of contact transformations with respect to the 1-form
$\d z - p \, \d x$
with conformal factor 
\begin{equation}\label{conformalfactor}
\mathrm{exp}(B(t)) = 
\mathrm{exp}\left(\int_0^t \der{\cL}{z}(\tau) \,\d \tau\right).
\end{equation}
\end{proof}

To close this section, let us briefly state the link of the Herglotz variational principle to the more common Hamiltonian formulation of contact dynamics. The contact Hamiltonian $H: T^* Q\times\R \rightarrow \R$ is defined by Legendre transformation
\[ H(x,p,z) = p \dot{x} - \cL(x,\dot{x},z) , \]
where $\dot{x}$ is eliminated from the right hand side by $p = \der{\cL}{\dot{x}}$. Taking the partial derivative with respect to $z$ gives
\[ \der{H}{z} = - \der{\cL}{z} , \]
hence from Equation \eqref{energy-diffeqn} it follows that
\[ \dot{H} = -\der{H}{z} H. \]
Differentiating instead with respect to $p$ and $x$ gives us the contact Hamiltonian equations:
\begin{align*}
    \der{H}{p} &= \dot{x} , \\
    \der{H}{x} &= - \der{\cL}{x} 
    = - \frac{\d}{\d t} \der{\cL}{\dot{x}} + \der{\cL}{z}\der{\cL}{\dot{x}} \\
    &= -\dot{p} - p \der{H}{z} .
\end{align*}

\section{Discrete Herglotz variational principle}\label{sec:dhvp}
As is standard in discrete mechanics, in what follows we replace $TQ$ by $Q^2$ (see e.g.~\cite{marsden2001discrete}).

\begin{definition}
Let $L: Q^2 \times \R^2 \times \R \rightarrow \R$ and $h > 0$. Given a discrete curve $x = (x_0,\ldots,x_N) \in Q^{N+1}$, we define $z = (z_0,\ldots,z_N) \in \R^{N+1}$ by $z_0 = 0$ and
\begin{equation}\label{z-difference}
 z_{j+1} - z_j = h L(x_j,x_{j+1},z_j,z_{j+1};h).
\end{equation}
The discrete curve $x$ is a \emph{solution to the discrete Herglotz variational principle} if
\begin{equation}\label{dVP-condition}
\der{z_{j+1}}{x_j} = 0,
\end{equation}
for all $j \in \{ 1,\ldots,N-1\}$.
\end{definition}

Note that Equation \eqref{z-difference} is a discrete version of Equation \eqref{z-differential}, and that Equation \eqref{dVP-condition} means that for a critical discrete curve $x$, a variation of $x_k$ can affect $z_k$ but none of the other $z_j$. In particular, this implies that $z_N$ is critical with respect to variations of $x_1,\ldots,x_{N-1}$. Most of the time we will consider a fixed step size $h$ and omit it from the notation of the discrete Lagrangian $L(x_j,x_{j+1},z_j,z_{j+1})$.

\begin{theorem}\label{thm-dgEL}
For a sufficiently small step size $h$, the discrete curve $x$ is a solution of the discrete Herglotz variational principle, with $z$ defined by Equation \eqref{z-difference}, {if and only if} it satisfies the \emph{discrete generalized Euler-Lagrange equations}
\begin{equation}\label{dgEL}
\D_1 L(x_j,x_{j+1},z_j,z_{j+1}) + \D_2 L(x_{j-1},x_j,z_{j-1},z_j) \frac{1 + h \D_3 L(x_j,x_{j+1},z_j,z_{j+1})}{1 - h \D_4 L(x_{j-1},x_j,z_{j-1},z_j)} = 0,
\end{equation} 
where $\D_i$ denotes the partial derivative with respect to the $i$-th entry.
\end{theorem}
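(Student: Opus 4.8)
The plan is to derive the discrete generalized Euler-Lagrange equations \eqref{dgEL} directly from the criticality condition \eqref{dVP-condition}, namely $\der{z_{j+1}}{x_j} = 0$, by carefully tracking how a variation of an interior point $x_j$ propagates through the coupled recurrence \eqref{z-difference}. The central difficulty, and the reason this is not a verbatim copy of the classical discrete calculation, is that the $z_j$ are not given by an explicit sum but by an \emph{implicit} recursion: $z_{j+1}$ depends on $z_j$ through the right-hand side $hL(x_j,x_{j+1},z_j,z_{j+1};h)$, which itself contains $z_{j+1}$. So the first task is to understand the sensitivity of the chain $z_0,z_1,\ldots$ to a perturbation of a single $x_j$.

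First I would differentiate the defining relation \eqref{z-difference} implicitly. Writing $L_{(k)} := L(x_k,x_{k+1},z_k,z_{k+1})$, differentiation of $z_{k+1} - z_k = h L_{(k)}$ with respect to some variable gives
\[
\der{z_{k+1}}{\,\cdot\,} - \der{z_k}{\,\cdot\,} = h\left( \text{(explicit terms)} + \D_3 L_{(k)} \der{z_k}{\,\cdot\,} + \D_4 L_{(k)} \der{z_{k+1}}{\,\cdot\,} \right),
\]
and solving for $\der{z_{k+1}}{\,\cdot\,}$ in terms of $\der{z_k}{\,\cdot\,}$ introduces exactly the factor $1/(1 - h\D_4 L_{(k)})$ seen in \eqref{dgEL}; this is where the smallness of $h$ is needed, to guarantee $1 - h\D_4 L_{(k)} \neq 0$ so the recursion is well defined and each $z_{k+1}$ is a genuine function of the $x$'s. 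Now fix an interior index $j$ and perturb only $x_j$. For indices $k < j-1$ the perturbation has no effect, so $\der{z_k}{x_j} = 0$ up to $k = j-1$. The variable $x_j$ enters the recurrence \eqref{z-difference} in two places: as the second argument of $L_{(j-1)}$ (contributing $\D_2 L_{(j-1)}$) and as the first argument of $L_{(j)}$ (contributing $\D_1 L_{(j)}$). I would compute $\der{z_j}{x_j}$ from the step $k=j-1$ and then $\der{z_{j+1}}{x_j}$ from the step $k=j$, using the implicit relation above at each stage.

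Concretely, at step $k=j-1$, since $\der{z_{j-1}}{x_j}=0$, the implicit differentiation yields
\[
\der{z_j}{x_j} = \frac{h\,\D_2 L_{(j-1)}}{1 - h\,\D_4 L_{(j-1)}}.
\]
Then at step $k=j$, the quantity $z_{j+1}-z_j = hL_{(j)}$ depends on $x_j$ both explicitly (through $\D_1 L_{(j)}$) and through $z_j$ (through $\D_3 L_{(j)}\der{z_j}{x_j}$ and a $\der{z_j}{x_j}$ term on the left), so solving for $\der{z_{j+1}}{x_j}$ gives
\[
\der{z_{j+1}}{x_j} = \frac{1}{1 - h\,\D_4 L_{(j)}}\left( \der{z_j}{x_j} + h\,\D_1 L_{(j)} + h\,\D_3 L_{(j)} \der{z_j}{x_j} \right).
\]
Substituting the expression for $\der{z_j}{x_j}$ and factoring out the common nonzero prefactor $\dfrac{h}{(1 - h\D_4 L_{(j)})}$, the criticality condition $\der{z_{j+1}}{x_j}=0$ reduces precisely to \eqref{dgEL}. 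The converse is immediate: running this computation backwards shows that \eqref{dgEL} forces $\der{z_{j+1}}{x_j}=0$ for each interior $j$.

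The main obstacle I anticipate is purely bookkeeping rather than conceptual: keeping the two occurrences of $x_j$ and the implicit $z$-dependence cleanly separated, and verifying that the denominators $1 - h\D_4 L_{(k)}$ factor out in exactly the arrangement displayed in \eqref{dgEL}. One should also confirm that perturbing $x_j$ does not contaminate $z_m$ for $m > j+1$ in a way that re-enters the condition for index $j$—but this follows because \eqref{dVP-condition} is imposed separately for each interior index and the later $z_m$ play no role in the single equation $\der{z_{j+1}}{x_j}=0$. The hypothesis that $h$ be sufficiently small should be invoked once, at the outset, to ensure all denominators are bounded away from zero so that $z$ is a well-defined smooth function of the discrete curve $x$ and the implicit differentiation is justified.
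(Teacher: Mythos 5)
Your proposal is correct and follows essentially the same route as the paper's proof: implicit differentiation of the recursion \eqref{z-difference} to obtain $\der{z_j}{x_j} = \frac{h \D_2 L(x_{j-1},x_j,z_{j-1},z_j)}{1 - h \D_4 L(x_{j-1},x_j,z_{j-1},z_j)}$, then solving the implicitly differentiated step $k=j$ for $\der{z_{j+1}}{x_j}$ and using the smallness of $h$ to keep the factors $1 - h\D_4 L$ nonvanishing, so that the criticality condition \eqref{dVP-condition} is equivalent to \eqref{dgEL}. The one place you are slightly more explicit than the paper is in noting that $\der{z_{j-1}}{x_j}=0$ holds identically (not just on solutions) and that later $z_m$ cannot re-enter the condition at index $j$; the paper leaves this implicit, but the substance of the argument is the same.
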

Note that while in general the $x_j$ have several components, the $z_j$ are always scalar, hence $\D_1 L$ and $\D_2 L$ are vectors but $\D_3 L$ and $\D_4 L$ are scalars.

Equation \eqref{dgEL} is equivalent to
\begin{equation}\label{dgEL-alternative}
\begin{split}
& 0 = \D_2 L(x_{j-1},x_j,z_{j-1},z_j) + \D_1 L(x_j,x_{j+1},z_j,z_{j+1}) \\ 
&+ \frac{h \D_2 L(x_{j-1},x_j,z_{j-1},z_j)}{1 - h \D_4 L(x_{j-1},x_j,z_{j-1},z_j)} ( \D_3 L(x_j,x_{j+1},z_j,z_{j+1}) + \D_4 L(x_{j-1},x_j,z_{j-1},z_j) ). %\\
%&\hspace{12cm}.
\end{split}
\end{equation}
In the first line one recognizes the usual discrete Euler-Lagrange equations. The term in the second line is a discretization of $\der{L}{\dot{x}} \der{L}{z}$.

\begin{proof}[Proof of Theorem \ref{thm-dgEL}]
From Equation \eqref{z-difference} it follows that
\begin{align*}
\der{z_{j+1}}{x_j}
= \der{z_j}{x_j} + h \D_1 L(x_j,x_{j+1},z_j,z_{j+1}) 
&+ h \D_3 L(x_j,x_{j+1},z_j,z_{j+1}) \der{z_j}{x_j} \\
&+ h \D_4 L(x_j,x_{j+1},z_j,z_{j+1}) \der{z_{j+1}}{x_j} .
\end{align*}
On solutions we have 
\begin{align*}
\der{z_{j+1}}{x_j} &= 0 , \\
\der{z_j}{x_j} &= h \D_2 L(x_{j-1},x_j,z_{j-1},z_j) +  h \D_4 L(x_{j-1},x_j,z_{j-1},z_j) \der{z_j}{x_j} ,
\end{align*}
where the derivative $\D_3 L$ is omitted because $\der{z_{j-1}}{x_j} = 0$.
It follows that
\begin{align*}
&\left( 1 -h \D_4 L(x_j,x_{j+1},z_j,z_{j+1}) \right) \der{z_{j+1}}{x_j} \\
&= h \D_1 L(x_j,x_{j+1},z_j,z_{j+1}) \\
&\quad + ( 1 + h \D_3 L(x_j,x_{j+1},z_j,z_{j+1}) ) \frac{h \D_2 L(x_{j-1},x_j,z_{j-1},z_j)}{1 - h \D_4 L(x_{j-1},x_j,z_{j-1},z_j)},
\end{align*} 
hence for sufficiently small $h$, $\der{z_{j+1}}{x_j} = 0$ is equivalent to Equation \eqref{dgEL}.
\end{proof}

In analogy to the continuous case, we will always assume the non-degeneracy condition 
\[ \left| \D_1 \D_2 L(x_j,x_{j+1},z_j,z_{j+1}) \right| \neq 0, \]
which guarantees that for sufficiently small $h$, Equation \eqref{dgEL} can be solved for $x_{j+1}$.

\begin{theorem}\label{thm-contact}
The map $Q^2 \times \R \mapsto Q^2 \times \R: (x_{j-1},x_j,z_{j-1}) \mapsto (x_j,x_{j+1},z_j),$
given by the generalized discrete Euler-Lagrange equations, induces a map 
\[ F:T^* Q \times \R \mapsto T^* Q \times \R: (x_{j-1},p_{j-1},z_{j-1}) \mapsto (x_j,p_j,z_j), \]
where
\begin{equation}\label{equalmomenta}
    p_j=p_j^-=p_j^+
\end{equation}
and
\begin{align}\label{pj-}
    p_j^- &= \frac{h \D_2 L(x_{j-1},x_j,z_{j-1},z_j)}{1 - h \D_4 L(x_{j-1},x_j,z_{j-1},z_j)} , \\
    \label{pj+}
    p_j^+ &= 
    -\frac{h \D_1L(x_{j},x_{j+1},z_j,z_{j+1})}{1+h\D_3L(x_{j},x_{j+1},z_j,z_{j+1})} .
\end{align} 
The map $F$ is a contact transformation with respect to the 1-form $\d z - p \,\d x$.
\end{theorem}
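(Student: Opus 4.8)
The plan is to adapt the argument used for the continuous flow (the earlier proposition showing that the flow of the generalized Euler--Lagrange equations is a contact transformation) to the discrete setting, phrased directly in terms of the pullback $F^*$. Reading the image coordinates $(x_j,p_j,z_j)$ as functions on the source $T^*Q\times\R$ with coordinates $(x_{j-1},p_{j-1},z_{j-1})$, the assertion $F^*(\d z - p\,\d x) = \lambda\,(\d z - p\,\d x)$ becomes the identity of one-forms
\[
\d z_j - p_j\,\d x_j = \lambda\,(\d z_{j-1} - p_{j-1}\,\d x_{j-1})
\]
on the source, for some nowhere-vanishing $\lambda$. The only inputs needed are the scalar relation \eqref{z-difference}, the momentum definitions \eqref{pj-}--\eqref{pj+}, and the matching \eqref{equalmomenta}; I would not dwell on the construction of $F$ beyond recalling that the non-degeneracy condition and small $h$ make it well defined.

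First I would take the exterior derivative of \eqref{z-difference}, with every $\D_i L$ evaluated at $(x_{j-1},x_j,z_{j-1},z_j)$, to get $\d z_j - \d z_{j-1} = h(\D_1 L\,\d x_{j-1} + \D_2 L\,\d x_j + \D_3 L\,\d z_{j-1} + \D_4 L\,\d z_j)$, and then collect the $\d z$-terms as
\[
(1 - h\D_4 L)\,\d z_j - (1 + h\D_3 L)\,\d z_{j-1} = h\D_1 L\,\d x_{j-1} + h\D_2 L\,\d x_j .
\]
Here lies the conceptual crux, and the place where the bookkeeping must be done carefully: definition \eqref{pj-} gives $h\D_2 L = p_j^-(1 - h\D_4 L)$, while \eqref{pj+} read one index lower gives $h\D_1 L = -\,p_{j-1}^+(1 + h\D_3 L)$, both evaluated at the \emph{same} point $(x_{j-1},x_j,z_{j-1},z_j)$. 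Thus the two slot-derivatives of $L$ produce the outgoing momentum $p_j^-$ at the later node and the incoming momentum $p_{j-1}^+$ at the earlier node, exactly as the two slots of a discrete Lagrangian do in the symplectic theory, and they share the common conformal factors $1\pm h\D_{3,4}L$.

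Substituting and factoring then aligns everything into
\[
(1 - h\D_4 L)\,(\d z_j - p_j^-\,\d x_j) = (1 + h\D_3 L)\,(\d z_{j-1} - p_{j-1}^+\,\d x_{j-1}).
\]
Finally I would invoke the matching \eqref{equalmomenta}, i.e.\ $p_j = p_j^-$ and $p_{j-1} = p_{j-1}^+$ (equivalent to \eqref{dgEL}), to obtain
\[
\d z_j - p_j\,\d x_j = \frac{1 + h\D_3 L}{1 - h\D_4 L}\,(\d z_{j-1} - p_{j-1}\,\d x_{j-1}),
\]
which is $F^*(\d z - p\,\d x) = \lambda\,(\d z - p\,\d x)$ with $\lambda = (1 + h\D_3 L)/(1 - h\D_4 L)$. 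For small $h$ both factors are positive, so $\lambda$ is nonvanishing and $F$ is a contact transformation. The computation is essentially forced; the only genuine care is the well-definedness of $F$ (non-degeneracy and small $h$, so the implicit equations solve and the denominators stay nonzero) and the correct identification of $p^-$ and $p^+$ with the single phase-space momentum $p$. As a sanity check I would note that $\lambda = 1 + h(\D_3 L + \D_4 L) + O(h^2)$ is the natural discretization of the continuous conformal factor \eqref{conformalfactor}, since $\D_3 L + \D_4 L$ approximates $\der{\cL}{z}$.
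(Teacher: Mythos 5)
Your proposal is correct and takes essentially the same approach as the paper: both differentiate the defining relation \eqref{z-difference}, collect the $\d z$-terms into $(1-h\D_4 L)\,\d z_j$ and $(1+h\D_3 L)\,\d z_{j-1}$, and identify the momentum one-forms to obtain the conformal factor $(1+h\D_3 L)/(1-h\D_4 L)$. The only cosmetic difference is bookkeeping: the paper substitutes the discrete Euler--Lagrange equation \eqref{dgEL} directly to rewrite the $h\D_1 L$ term via $\D_2 L$ on the previous interval, whereas you read $h\D_1 L = -p_{j-1}^+(1+h\D_3 L)$ off definition \eqref{pj+} and then invoke the matching \eqref{equalmomenta}, which is equivalent.
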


Equations \eqref{pj-} and \eqref{pj+} define the discrete Legendre transforms for contact systems, compare \cite[Section 1.5]{marsden2001discrete}.

\begin{proof} 
First note that the second equality in Equation \eqref{equalmomenta} follows from Equation \eqref{dgEL} and the definitions \eqref{pj-} and \eqref{pj+}.

To prove that $F$ is a contact transformation, we consider the case $j=2$.
The general statement is obtained by shifting all indices by the same integer. 

From
\[ z_2 - z_1 = h L(x_1,x_2,z_1,z_2) \]
it follows that
\begin{align*}
\d z_2 - \d z_1 
&= h \D_1 L(x_1,x_2,z_1,z_2) \,\d x_1 +  h \D_2 L(x_1,x_2,z_1,z_2) \,\d x_2 \\
&\quad + h \D_3 L(x_1,x_2,z_1,z_2) \,\d z_1 + h \D_4 L(x_1,x_2,z_1,z_2) \,\d z_2,
\end{align*}
hence, on solutions of the generalized Euler-Lagrange equations,
\begin{align*}
& (1 - h \D_4 L(x_1,x_2,z_1,z_2)) \,\d z_2 -  h \D_2 L(x_1,x_2,z_1,z_2) \,\d x_2 \\
&= \left( 1 +  h \D_3 L(x_1,x_2,z_1,z_2) \right) \d z_1 +  h \D_1 L(x_1,x_2,z_1,z_2) \,\d x_1 \\
&=  \left( 1 +  h \D_3 L(x_1,x_2,z_1,z_2) \right) \d z_1
- h \D_2 L(x_0,x_1,z_0,z_1) \frac{1 + h \D_3 L(x_1,x_2,z_1,z_2)}{1 - h \D_4 L(x_0,x_1,z_0,z_1)} \,\d x_1 .
\end{align*}
It follows that
\begin{align*}
&\d z_2 - \frac{h \D_2 L(x_1,x_2,z_1,z_2)}{1 - h \D_4 L(x_1,x_2,z_1,z_2)} \,\d x_2  \\
&\quad =  \frac{1 +  h \D_3 L(x_1,x_2,z_1,z_2)}{1 - h \D_4 L(x_1,x_2,z_1,z_2)}  \left(  \d z_1 - \frac{h \D_2 L(x_0,x_1,z_0,z_1)}{1 - h \D_4 L(x_0,x_1,z_0,z_1) } \,\d x_1 \right) . \qedhere
\end{align*}
\end{proof}

Note that the conformal factor
\[ \frac{1 + h \D_3 L}{1 - h \D_4 L} 
= 1 + h(\D_3 L + \D_4 L) + \cO(h^2)
= e^{h(\D_3 L + \D_4 L) } + \cO(h^2)\]
 is consistent with the continuous $\exp\!\left(\int_0^h \der{\cL}{z} \d t \right)$, cf.~Equation~\eqref{conformalfactor}.
We stress that Theorems~\ref{thm-dgEL} and~\ref{thm-contact} also apply to the case where $L$ does not depend on $z_2$, i.e.~$\D_4 L=0$.

A natural question to ask at this point is whether every contact transformation comes from a variational principle. Just like the symplectic counterpart to this question, it is answered in the affirmative using generating functions. 

Remarkably, the following result is stronger than the literal inverse to Theorem \ref{thm-contact}, which said that any discrete Lagrangian $L(x_1,x_2,z_1,z_2)$ yields a contact transformation. We will show that every contact transformation can be obtained from a discrete Lagrangian $L(x_1,x_2,z_1)$ that does not depend on the second instance of $z$.

We stress the importance of this result, since it implies that every contact integrator is variational. 

\begin{theorem}\label{thm-inverse}
Iterations of any contact transformation $(x_0,p_0,z_0) \mapsto (x_1,p_1,z_1)$ yield a discrete curve $x = (x_0,\ldots,x_N)$ that solves the discrete Herglotz variational principle for some discrete Lagrangian $L(x_j,x_{j+1},z_j)$.
\end{theorem}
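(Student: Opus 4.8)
The plan is to adapt the symplectic theory of generating functions to the contact setting, using the discrete Legendre transforms \eqref{pj-}--\eqref{pj+} specialized to $\D_4 L = 0$. Write the given map as $F\colon(x_0,p_0,z_0)\mapsto(x_1,p_1,z_1)$. Its defining property $F^*(\d z - p\,\d x) = \lambda\,(\d z - p\,\d x)$, for a nowhere-vanishing conformal factor $\lambda$, reads in terms of the component functions of $F$ as
\[ \d z_1 - p_1\,\d x_1 = \lambda\,(\d z_0 - p_0\,\d x_0). \]
The goal is to build a discrete Lagrangian $L(x_0,x_1,z_0)$ whose discrete Legendre transforms reproduce $p_0$ and $p_1$, after which Theorems \ref{thm-dgEL} and \ref{thm-contact} will show that the orbit of $F$ solves the discrete Herglotz principle.

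First I would change variables. Under the non-degeneracy assumption that $p_0\mapsto x_1$ is invertible for fixed $(x_0,z_0)$, i.e.\ $\det(\partial x_1/\partial p_0)\neq 0$, I may take $(x_0,x_1,z_0)$ as independent coordinates and regard $p_0$, $p_1$, $z_1$ and $\lambda$ as functions of them. I then set
\[ L(x_0,x_1,z_0) := \frac{z_1(x_0,x_1,z_0) - z_0}{h}, \]
so that $z_1 = z_0 + hL$ is exactly the discrete $z$-evolution \eqref{z-difference} with no dependence on the later $z$; that is, $\D_4 L = 0$ by construction, which is the strengthening announced before the theorem.

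The verification is a coefficient comparison. Substituting $\d z_1 = \d z_0 + h(\D_1 L\,\d x_0 + \D_2 L\,\d x_1 + \D_3 L\,\d z_0)$ into the contact identity and matching the coefficients of the independent one-forms $\d x_0$, $\d x_1$, $\d z_0$ gives
\[ \lambda = 1 + h\D_3 L, \qquad p_1 = h\D_2 L, \qquad p_0 = -\frac{h\D_1 L}{1+h\D_3 L}, \]
which are precisely \eqref{pj-} and \eqref{pj+} with $\D_4 L = 0$, together with the conformal factor of Theorem \ref{thm-contact}. Iterating $F$ yields $(x_j,p_j,z_j)$; at each interior node the momentum produced by the step $x_{j-1}\to x_j$ is $h\D_2 L(x_{j-1},x_j,z_{j-1}) = p_j^-$, while the momentum consumed by the step $x_j\to x_{j+1}$ is $-h\D_1 L(x_j,x_{j+1},z_j)/(1+h\D_3 L(x_j,x_{j+1},z_j)) = p_j^+$. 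Both equal the true orbit momentum $p_j$, so $p_j^- = p_j^+$, which is the discrete generalized Euler--Lagrange equation \eqref{dgEL}; Theorem \ref{thm-dgEL} then gives the claim.

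I expect the main obstacle to be the non-degeneracy hypothesis behind the change of variables. For a general contact transformation $p_0\mapsto x_1$ need not be invertible, the identity being the extreme case, so a generating function of this purely positional type may not exist globally. I would handle this as in the symplectic theory by working where the condition holds, which covers the maps of interest: for a near-identity contact integrator $\partial x_1/\partial p_0 = \cO(h)$ is still invertible since $h\neq 0$. Note also that the parameter $h$ cancels in the momentum relations, since $h\D_2 L = \D_2 z_1 = p_1$, so it plays only a cosmetic role. The minor remaining points are that $\lambda = 1 + h\D_3 L$ must be nonvanishing, which holds for small $h$, and that the $z$-values along the orbit are consistent with the normalization fixed in the definition of the discrete principle.
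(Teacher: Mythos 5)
Your proposal is correct and follows essentially the same route as the paper: both take $z_1$ as a generating function in the variables $(x_0,x_1,z_0)$, define $L(x_0,x_1,z_0)=\bigl(z_1(x_0,x_1,z_0)-z_0\bigr)/h$ so that $\D_4 L=0$, and obtain the relations for $p_0$, $p_1$ and the conformal factor by matching coefficients of $\d x_0$, $\d x_1$, $\d z_0$ in the contact identity. The only cosmetic differences are that you conclude via $p_j^-=p_j^+$ together with Theorem~\ref{thm-dgEL} where the paper verifies $\der{z_{j+1}}{x_j}=0$ directly by the chain rule, and that you state explicitly the non-degeneracy hypothesis $\det(\partial x_1/\partial p_0)\neq 0$ that the paper leaves implicit.
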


Note that $L$ does not depend on $z_{j+1}$ in the statement of Theorem \ref{thm-inverse}. Hence without loss of generality we can restrict our attention to Lagrangians depending only on the first of the $z$-coordinates, as we will do e.g.~in Example~\ref{ex-harmonic}.

\begin{proof}[Proof of Theorem \ref{thm-inverse}]
As pointed out in \cite{bravetti2017contact}, the coordinate $z_1$ of a contact transformation $(x_0,p_0,z_0) \mapsto (x_1,p_1,z_1)$ can be considered as a generating function. We have
\[ \d z_1 - p_1 \,\d x_1 = f \left( \d z_0 - p_0 \,\d x_0 \right) . \]
Writing $z_1 = S(x_0,x_1,p_0,p_1,z_0)$ we find
\begin{align*}
    f \left( \d z_0 - p_0 \,\d x_0 \right) + p_1 \,\d x_1 &= \der{S}{x_0} \,\d x_0 + \der{S}{p_0} \,\d p_0 + \der{S}{z_0} \,\d z_0 + \der{S}{x_1} \,\d x_1 +  \der{S}{p_1} \,\d p_1 .
\end{align*}
It follows that $\der{S}{p_0} = \der{S}{p_1} = 0$ and
\begin{equation}\label{generating-function}
\begin{dcases}
    f = \der{S}{z_0} ,\\
    p_0 = -\left(\der{S}{z_0}\right)^{-1} \der{S}{x_0} ,\\
    p_1 = \der{S}{x_1} .
\end{dcases}
\end{equation}
Note that $S$ does not depend on $p_0$ or $p_1$, hence from now on we will write $S(x_0,x_1, z_0)$. Setting 
\[ L(x_0,x_1,z_0) = \frac{1}{h}\left( S(x_0,x_1, z_0) - z_0 \right), \]
iterations of the contact map satisfy
\[ z_{j+1} - z_j = h L(x_j,x_{j+1},z_j) . \]
Furthermore, using Equation \eqref{generating-function} we calculate that
\begin{align*}
    \der{z_{j+1}}{x_j} &= \der{S(x_j,x_{j+1},z_j)}{x_j} + \der{S(x_j,x_{j+1},z_j)}{z_j} \der{z_j}{x_j} \\
    &= \der{S(x_j,x_{j+1},z_j)}{x_j} + \der{S(x_j,x_{j+1},z_j)}{z_j} \der{S(x_{j-1},x_j,z_{j-1})}{x_j} \\
    &= - p_j \der{S(x_j,x_{j+1},z_j)}{z_j} + \der{S(x_j,x_{j+1},z_j)}{z_j} p_j \\
    &= 0,
\end{align*}
so the discrete curve $x$ obtained by iteration of the contact map satisfies the discrete Herglotz variational principle for $L$.
\end{proof}

\begin{example}\label{ex-harmonic}
The Lagrangian $\cL = \frac{1}{2} \dot{x}^2 - V(x) - \alpha z$ describes a mechanical system with Rayleigh dissipation (i.e.\@ a friction force linear in the velocity). The generalized Euler-Lagrange equation is
\[ \ddot{x} = - V'(x) - \alpha \dot{x} . \]
Note that $x$ need not be a scalar: the Lagrangian $\cL = \frac{1}{2} |\dot{x}|^2 - V(x) - \alpha z$ yields the analogous multi-component equation. This contrasts many other variational descriptions of the damped harmonic oscillator, which only apply to the scalar case \cite{musielak2008standard,cieslinski2010direct}. The same comment applies to the following discretization, which we write down for scalar $x$ but can easily be adapted to higher dimensions.

A discretization of the Lagrangian is
\begin{equation}\label{ex1-Lag-V}
L(x_j, x_{j+1}, z_j, z_{j+1}) = \frac{1}{2} \left( \frac{x_{j+1} - x_j}{h}\right)^2 - \frac{V(x_j) + V(x_{j+1})}{2} - \alpha z_j .
\end{equation}
Note that this Lagrangian depends only on $z_j$, not on $z_{j+1}$. Its discrete generalized Euler-Lagrange equations read
\begin{equation}\label{ex1-dEL-V}
\frac{x_{j+1} - 2 x_j + x_{j-1} }{h^2} = - V'(x_j) - \alpha \left( \frac{x_j - x_{j-1}}{h} - \frac{h}{2} V'(x_j) \right) .
\end{equation}
The discrete momentum can be calculated as
\[ p_j = \frac{h \D_2 L(x_{j-1},x_j,z_{j-1},z_j)}{1 - h \D_4 L(x_{j-1},x_j,z_{j-1},z_j)} = \frac{x_j - x_{j-1}}{h} - \frac{h}{2} V'(x_j) \]
or
\[ p_{j-1} = \frac{-h \D_1 L(x_{j-1},x_j,z_{j-1},z_j)}{1 + h \D_3 L(x_{j-1},x_j,z_{j-1},z_j)}
= \frac{ \frac{x_j - x_{j-1}}{h} + \frac{h}{2} V'(x_{j-1}) }{ 1 - h \alpha } . \]
We can implement the integrator explicitly in position-momentum formulation as
\begin{align*}
x_j &= x_{j-1} + h(1 - h \alpha) p_{j-1} - \frac{h^2}{2} V'(x_{j-1}) , \\
p_j &= (  1 - h \alpha) p_{j-1} - \frac{h}{2} \left( V'(x_j) + V'(x_{j-1}) \right).
\end{align*}

Let us consider the damped harmonic oscillator, $V(x) = \frac{1}{2} x^2$. Its equation of motion is
\[ \ddot{x} = - x - \alpha \dot{x} . \]
The above discrete Lagrangian then becomes
\begin{equation}\label{ex1-Lag}
L(x_j, x_{j+1}, z_j, z_{j+1}) = \frac{1}{2} \left( \frac{x_{j+1} - x_j}{h}\right)^2 - \frac{1}{4} \left( x_j^2 + x_{j+1}^2 \right) - \alpha z_j
\end{equation}
and it discrete generalized Euler-Lagrange equations read
\begin{equation}\label{ex1-dEL}
\frac{x_{j+1} - 2 x_j + x_{j-1} }{h^2} = - x_j - \alpha \left( \frac{x_j - x_{j-1}}{h} - \frac{h}{2} x_j \right) .
\end{equation}
The position-momentum formulation of the integrator gives
\begin{align*}
x_j &= \left( 1 - \frac{h^2}{2} \right) x_{j-1} + h(1 - h \alpha) p_{j-1} , \\
p_j &= (  1 - h \alpha) p_{j-1} - \frac{h}{2} ( x_j + x_{j-1} ).
\end{align*}
\end{example}

\begin{example}\label{ex-harmonic-symmetric}
	For the theory of discrete contact systems by itself, it is sufficient to have the Lagrangian depend on $z_j$ but not on $z_{j+1}$, as we saw in Theorem \ref{thm-inverse}.	For the sake of a good numerical approximation, however, it is beneficial to relax this condition. Continuing the example of a damped mechanical system, we can take the discrete Lagrangian
	\begin{equation}\label{ex2-Lag-V}
	L(x_j, x_{j+1}, z_j, z_{j+1}) = \frac{1}{2} \left( \frac{x_{j+1} - x_j}{h}\right)^2 - \frac{V(x_j) + V(x_{j+1})}{2} - \alpha \frac{z_j + z_{j+1}}{2} .
	\end{equation}
	Note the difference with Example \ref{ex-harmonic}: now $L$ depends also on $z_{j+1}$.	Its discrete generalized Euler-Lagrange equations read
	\begin{equation}\label{ex2-dEL-V}
	\frac{x_{j+1} - 2 x_j + x_{j-1} }{h^2} = - V'(x_j) - \frac{\alpha}{1 + \frac{h}{2} \alpha} \left( \frac{x_j - x_{j-1}}{h} - \frac{h}{2} V'(x_j) \right) .
	\end{equation}
	Equations \eqref{ex1-dEL-V} and  \eqref{ex2-dEL-V} are related by a simple change in the parameter $\alpha$. This minor difference should not be dismissed, though, as the discrete Lagrangian \eqref{ex2-Lag-V} is a second order approximation of the continuous Lagrangian, compared to the first order approximation of Equation \eqref{ex1-Lag-V}. What we mean by this will be clarified in the next section: see Example \ref{ex:ex1cont} and Example \ref{ex:ex2cont}.
	
	The discrete momentum for the Lagrangian \eqref{ex2-Lag-V} can be calculated as
	\[ p_j^- = \frac{h \D_2 L(x_{j-1},x_j,z_{j-1},z_j)}{1 - h \D_4 L(x_{j-1},x_j,z_{j-1},z_j)} = \frac{ \frac{x_j - x_{j-1}}{h} - \frac{h}{2} V'(x_j) }{1 + \frac{h}{2}\alpha }\]
	or
	\[ p_{j-1}^+ = \frac{-h \D_1 L(x_{j-1},x_j,z_{j-1},z_j)}{1 + h \D_3 L(x_{j-1},x_j,z_{j-1},z_j)}
	= \frac{ \frac{x_j - x_{j-1}}{h} + \frac{h}{2} V'(x_{j-1}) }{ 1 - \frac{h}{2} \alpha } . \]
	The generalized Euler-Lagrange equations state that both formulas for the discrete momentum agree. 
	On solutions, we have that
	\[ p_j = p_j^+ = p_j^- = \frac{x_{j+1} - x_{j-1}}{2h}. \]
	
	We can implement the integrator explicitly in position-momentum formulation as
	\begin{align*}
	x_j &= x_{j-1} + h \left( 1 - \frac{h}{2} \alpha \right) p_{j-1} - \frac{h^2}{2} V'(x_{j-1}) , \\
	p_j &= \frac{ \left( 1 - \frac{h}{2} \alpha \right) p_{j-1} - \frac{h}{2} \left( V'(x_j) + V'(x_{j-1})  \right) }{ 1 + \frac{h}{2}\alpha } .
	\end{align*}
	
	Equation \eqref{ex2-dEL-V} is also the second order difference equation corresponding to the leapfrog (St\"ormer-Verlet) method
	\begin{align*}
	    x_{j+1} &= x_j + h \pi_{j + \frac{1}{2}} \\
	    \pi_{j + \frac{1}{2}} &= \pi_{j - \frac{1}{2}} - h \left( V'(x_j) + \frac{\alpha }{2} \left( \pi_{j + \frac{1}{2}} + \pi_{j - \frac{1}{2}} \right) \right).
	\end{align*}
	Indeed, eliminating the momentum $\pi$ from this system we find
	\begin{align*}
	    x_{j+1} - 2 x_j + x_{j-1}
	    &= h \left( \pi_{j + \frac{1}{2}} - \pi_{j - \frac{1}{2}} \right) \\
	    &= -h^2 V'(x_j) - \frac{h^2 \alpha}{2} \left( \pi_{j + \frac{1}{2}} + \pi_{j - \frac{1}{2}} \right) \\
	    &= -h^2 V'(x_j) - \frac{h \alpha}{2} (x_{j+1} - x_{j-1}) \\
	    &= -h^2 V'(x_j) - \frac{h \alpha}{2} (x_{j+1} - 2 x_j + x_{j-1}) - h \alpha (x_j - x_{j-1}) 
	\end{align*}
	hence
	\[ \left( 1 + \frac{h \alpha}{2} \right) (x_{j+1} - 2 x_j + x_{j-1}) = -h^2 V'(x_j) - h \alpha (x_j - x_{j-1}), \]
	which is equivalent to Equation \eqref{ex2-dEL-V}.
	
	The momenta at integer steps can be included in the leapfrog method by adding one internal stage:
	\begin{align*}
	    \pi_{j + \frac{1}{2}} &= \pi_{j} - \frac{h}{2} \left( V'(x_j) + \alpha \pi_{j + \frac{1}{2}} \right) \\
	    x_{j+1} &= x_j + h \pi_{j + \frac{1}{2}} \\
	    \pi_{j + 1} &= \pi_{j + \frac{1}{2}} - \frac{h}{2} \left( V'(x_{j+1}) + \alpha  \pi_{j + \frac{1}{2}} \right).
	\end{align*}
	We find that
	\begin{align*}
	 \pi_j &= \frac{\pi_{j + \frac{1}{2}} + \pi_{j - \frac{1}{2}}}{2} +\frac{h}{4} \alpha \left(\pi_{j + \frac{1}{2}} - \pi_{j - \frac{1}{2}} \right)\\
	&=\frac{x_{j+1} - x_{j-1}}{2h} + \frac{h}{4} \alpha \left(\pi_{j + \frac{1}{2}} - \pi_{j - \frac{1}{2}} \right) 
	%= \frac{1}{2} \left(1 - \frac{h}{2} \alpha \right) p_j^+ + \frac{1}{2} \left(1 + \frac{h}{2} \alpha \right) p_j^- = 
	=\left(1 + \frac{h^2}{4} \alpha^2 \right) p_j  + \frac{h^2}{4} \alpha V'(x_j), 
	\end{align*}
	hence when initialized with the same momentum, the difference between the result of our contact method and the leapfrog solution will be of order $h^2 (\alpha + \alpha^2)$. Whether $p_0$ or $\pi_0$ is a better approximation for the true initial momentum $\dot{x}$ depends on the initial conditions.
\end{example}

\begin{example}
For a more general contact system, motivated by \cite{sloan2018dynamical}, consider the Lagrangian
\[\cL = \frac{1}{2} \dot{x}^2 - V(x) - \frac{1}{2} \alpha z^2 .\]
The equations of motion are
\[ \begin{dcases}
     \ddot{x} = - V'(x) - \alpha z \dot{x} \\
     \dot{z} = \frac{1}{2} \dot{x}^2 - V(x) - \frac{1}{2} \alpha z^2 ,
\end{dcases}  \]
or, in position-momentum formulation,
\[ \begin{dcases}
     \dot{x} = p \\
     \dot{p} = - V'(x) - \alpha z \dot{x} \\
     \dot{z} = \frac{1}{2} \dot{x}^2 - V(x) - \frac{1}{2} \alpha z^2 .
\end{dcases} \]
Note that the Euler-Lagrange equation explicitly involves $z$ in this case, so it is not possible to solve the equations for $x$ and $p$ separately. This means that symplectic integrators cannot be applied (unless one adds a dummy variable to the systems in order to obtain an even-dimensional system once again). In comparison, in Examples \ref{ex-harmonic} and \ref{ex-harmonic-symmetric} a symplectic integrator could be applied, but it would not respect the contact structure. 

Consider the discrete Lagrangian
\[ L(x_j, x_{j+1}, z_j, z_{j+1}) = \frac{1}{2} \left( \frac{x_{j+1} - x_j}{h}\right)^2 - \frac{V(x_j) + V(x_{j+1})}{2} - \frac{1}{4} \alpha z_j^2 - \frac{1}{4} \alpha z_{j+1}^2 .\]
The discrete momenta are
\[ p_j = \frac{h \D_2 L(x_{j-1},x_j,z_{j-1},z_j)}{ 1 - h \D_4 L(x_{j-1},x_j,z_{j-1},z_j) }  = \frac{ \frac{x_j - x_{j-1}}{h} - \frac{h}{2} V'(x_j) }{ 1 + \frac{h}{2}\alpha z_j }\]
and
\[ p_{j-1} = \frac{-h \D_1 L(x_{j-1},x_j,z_{j-1},z_j)}{1 + h \D_3 L(x_{j-1},x_j,z_{j-1},z_j)}
= \frac{ \frac{x_j - x_{j-1}}{h} + \frac{h}{2} V'(x_{j-1}) }{ 1 - \frac{h}{2} \alpha z_{j-1} } . \]
Hence we find an implicit contact integrator
\[ \begin{dcases}
    x_{j+1} = x_j + h \left( 1 - \frac{h}{2} \alpha z_j \right) p_j - \frac{h^2}{2} V'(x_j) \\
    p_{j+1} = \frac{ \left( 1 - \frac{h}{2} \alpha z_j \right) p_j  - \frac{h}{2}\left( V'(x_j) + V'(x_{j+1}) \right) }{ 1 + \frac{h}{2} \alpha z_{j+1}} \\
    z_{j+1} = z_j + h L(x_j, x_{j+1}, z_j, z_{j+1}) .
\end{dcases}\]

\end{example}

\section{Backward error analysis}\label{sec:bea}
A central idea to explain the long-time behavior of symplectic integrators is the study of modified differential equations whose solutions interpolate the discrete solutions of a discrete system of equations.
This idea, looking for a perturbed continuous system that exactly corresponds to the discretization, is an example of backward error analysis.
It is a well-known and essential fact that if a symplectic integrator is applied to a Hamiltonian equation, then the resulting modified equation is Hamiltonian as well.
Similarly, when a classical variational integrator is applied to a Lagrangian system, the resulting modified equation is Lagrangian \cite{vermeeren2017modified}. 
Below we establish that an analogous result holds for contact variational integrators.

First let us have a look at the general form of the discrete generalized Euler-Lagrange equations.

\begin{proposition}\label{prop-consistent}
Consider a continuous non-degenerate Lagrangian $\cL(x,\dot{x},z)$ with generalized Euler-Lagrange equation
$\ddot{x} = f(x, \dot{x}, z)$ and a consistent discretization $L(x_j,x_{j+1},z_j,z_{j+1};h)$ of $\cL$, by which we mean that for any smooth $x$ and $z$ there holds
\[ L(x(t),x(t+h),z(t),z(t+h);h) = \cL(x(t),\dot{x}(t),z) + \cO(h). \]
Then the discrete generalized Euler-Lagrange equation is a consistent discretization of the continuous generalized Euler-Lagrange equation, i.e.\@ it takes the form
\begin{equation}\label{2nd-order}
\frac{x_{j-1} - 2 x_j + x_{j+1}}{h^2} = F(x_{j-1},x_j,x_{j+1},z_{j-1},z_j,z_{j+1};h),
\end{equation}
where for any smooth $x$ and $z$
\[F(x(t-h),x(t),x(t+h),z(t-h),z(t),z(t+h);h) = f( x(t), \dot{x}(t), z) + \cO(h). \] 
\end{proposition}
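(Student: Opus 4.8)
The plan is to start from the equivalent form \eqref{dgEL-alternative} of the discrete generalized Euler-Lagrange equations and to Taylor-expand each of its two lines along an arbitrary smooth pair of curves $x,z$, using the consistency $L=\cL+\cO(h)$ (read in a $C^1$ sense, so that the expansions may be differentiated). The annotations already supplied for \eqref{dgEL-alternative} are exactly the two claims I would verify: the first line, $\D_2 L(x_{j-1},x_j)+\D_1 L(x_j,x_{j+1})$, is a consistent discretization of $\der{\cL}{x}-\frac{\d}{\d t}\der{\cL}{\dot{x}}$, and the second line is a consistent discretization of $\der{\cL}{z}\der{\cL}{\dot{x}}$. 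Adding them reproduces the left-hand side of the continuous equation \eqref{generalized-EL}, which vanishes; isolating the second difference then yields \eqref{2nd-order}.

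For the first line, the crucial observation is that $x_{j+1}$ and $x_j$ enter $L$ through the discrete velocity $\frac{x_{j+1}-x_j}{h}$, so that $\D_1 L=-\frac1h\der{\cL}{\dot{x}}+\cO(1)$ and $\D_2 L=\frac1h\der{\cL}{\dot{x}}+\cO(1)$, the $\frac1h$ coefficients being forced by consistency and independent of the particular discretization. Writing $v_-=\frac{x_j-x_{j-1}}{h}$ and $v_+=\frac{x_{j+1}-x_j}{h}$, the velocity contributions to $\D_2 L(x_{j-1},x_j)+\D_1 L(x_j,x_{j+1})$ combine into $\frac1h\big(\der{\cL}{\dot{x}}(v_-)-\der{\cL}{\dot{x}}(v_+)\big)=-\der{^2\cL}{\dot{x}^2}\frac{x_{j+1}-2x_j+x_{j-1}}{h^2}+\cO(h)$, while the remaining $\cO(1)$ terms assemble into $\der{\cL}{x}-\der{^2\cL}{\dot{x}\partial x}\dot{x}-\der{^2\cL}{\dot{x}\partial z}\dot{z}$, i.e.\ exactly $\der{\cL}{x}-\frac{\d}{\d t}\der{\cL}{\dot{x}}$ after using $\dot{z}=\cL$ coming from \eqref{z-difference}. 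For the second line, $h\D_2 L\to\der{\cL}{\dot{x}}$, $\D_3 L+\D_4 L\to\der{\cL}{z}$, and the denominator $1-h\D_4 L\to 1$, so it tends to $\der{\cL}{\dot{x}}\der{\cL}{z}$.

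Collecting the two lines, the discrete equation reads $0=-\der{^2\cL}{\dot{x}^2}\frac{x_{j+1}-2x_j+x_{j-1}}{h^2}+\der{^2\cL}{\dot{x}^2}f+\cO(h)$, where $f$ is the right-hand side of the explicit second-order form of the generalized Euler-Lagrange equations. Since $\left|\der{^2\cL}{\dot{x}^2}\right|\neq 0$ (the non-degeneracy of $\cL$, consistent with $\left|\D_1\D_2 L\right|\neq 0$ for small $h$), I multiply by its inverse to obtain $\frac{x_{j+1}-2x_j+x_{j-1}}{h^2}=f+\cO(h)$. The same non-degeneracy lets me solve \eqref{dgEL} for $x_{j+1}$ via the implicit function theorem, so the second difference is a genuine function $F$ of $(x_{j-1},x_j,x_{j+1},z_{j-1},z_j,z_{j+1};h)$; the expansion above identifies its leading term as $f$, giving \eqref{2nd-order}.

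The main obstacle is the bookkeeping in the middle step: one must check that the subleading $\cO(1)$ contributions of $\D_1 L$ and $\D_2 L$, evaluated at arguments whose positions and $z$-values differ by $\cO(h)$, recombine precisely into the mixed-derivative terms $\der{^2\cL}{\dot{x}\partial x}\dot{x}$ and $\der{^2\cL}{\dot{x}\partial z}\cL$, with every discretization-dependent remainder pushed to $\cO(h)$. In particular the coefficient of the second difference must come out to be exactly the Hessian $\der{^2\cL}{\dot{x}^2}$ and nothing else, which is what makes the inversion legitimate. I would also be explicit that \emph{consistent} has to be read strongly enough ($C^1$ in the state variables, uniformly in $h$) to license differentiating $L=\cL+\cO(h)$; under the weak, value-only reading the derivatives $\D_i L$ need not inherit the required expansions.
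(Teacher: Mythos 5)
Your proposal is correct and follows essentially the same route as the paper's proof: both Taylor-expand the alternative form \eqref{dgEL-alternative} along smooth curves, identify the first line as a consistent discretization of $\der{\cL}{x}-\frac{\d}{\d t}\der{\cL}{\dot{x}}$ and the second line as one of $\der{\cL}{z}\der{\cL}{\dot{x}}$ (via $h\D_2 L \to \der{\cL}{\dot{x}}$, $\D_3 L+\D_4 L \to \der{\cL}{z}$), and then isolate the second difference using the non-degeneracy of $\der{^2\cL}{\dot{x}^2}$. Your closing caveats --- that consistency must be read in a $C^1$ sense to license differentiating $L=\cL+\cO(h)$, and that the implicit function theorem is needed to define $F$ --- are well placed and correspond exactly to the paper's own admission that its argument is formal, with rigor deferred to the meshed-variational-problem machinery of the cited reference.
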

We give a formal proof. A rigorous version of this argument is obtained by generalizing the corresponding proof in \cite{vermeeren2017modified} to the case of the Herglotz variational principle. A different proof strategy can be found in \cite[Section 2.3]{marsden2001discrete}.

\begin{proof}[Proof of Proposition \ref{prop-consistent}]
Let $x$ be a smooth curve interpolating solutions of the discrete generalized Euler-Lagrange equation \eqref{dgEL-alternative}. Then $\cE_1 + \cE_2 \cE_3 = 0$, where
\[ \cE_1 = \D_1 L(x(t),x(t+h),z(t),z(t+h);h) + \D_2 L(x(t-h),x(t),z(t-h),z(t);h) , \]
\[ \cE_2 = \frac{h \D_2 L(x(t-h),x(t),z(t-h),z(t);h)}{1 - h \D_4 L(x(t-h),x(t),z(t-h),z(t);h) } , \]
and
\[ \cE_3 = \D_3 L(x(t),x(t+h),z(t),z(t+h);h) + \D_4 L(x(t-h),x(t),z(t-h),z(t);h) . \]
We start by showing that $\cE_1$ is a consistent discretization of the classical Euler-Lagrange equation.
In terms of the Taylor expansions of the shifted variables, it becomes
\begin{align*}
    \cE_1 &= \left( \der{}{x} - \frac{1}{h} \der{}{\dot{x}} \right) L(x, x+h\dot{x}+\ldots,z,z+h\dot{z}+\ldots;h) \\
    &\quad + \left( \der{}{x} + \frac{1}{h} \der{}{\dot{x}} \right) L(x-h\dot{x}+\ldots,x,z-h\dot{z}+\ldots,z;h) + \cO(h) .
\end{align*}
Since the Lagrangian is assumed to be a consistent discretization, we have 
\[ L(x(t),x(t+h),z(t),z(t+h);h) = \cL(x(t),\dot{x}(t),z(t)) + \cO(h), \]
hence
\begin{align*}
    \cE_1 &= \left( \der{}{x} - \frac{1}{h} \der{}{\dot{x}} \right) \cL(x(t),\dot{x}(t),z(t)) \\
    &\quad + \left( \der{}{x} + \frac{1}{h} \der{}{\dot{x}} \right) \cL(x(t-h),\dot{x}(t-h),z(t-h)) + \cO(h) 
\end{align*}
In other words
\[ \cE_1 = \left( \der{}{x} - \frac{1}{h} \der{}{\dot{x}} \right) \cL + \left( \der{}{x} + \frac{1}{h} \der{}{\dot{x}} \right) \left( \cL - h \frac{\d\cL}{\d t} \right) + \cO(h), \]
where each $\cL$ is evaluated at $(x(t),\dot{x}(t),z(t))$. We can simplify this expression using the fact that
\[ \der{}{\dot{x}} \frac{\d\cL}{\d t} = \der{}{\dot{x}} \left( \der{\cL}{x} \dot{x} + \der{\cL}{\dot{x}} \ddot{x} + \der{\cL}{z} \dot{z} \right)
= \frac{\d}{\d t} \der{\cL}{\dot{x}} + \der{\cL}{x} \]
and obtain
\[ \cE_1 = \der{\cL}{x} - \frac{\d}{\d t} \der{\cL}{\dot{x}}  + \cO(h) . \]
A perfectly analogous computation yields that
\[ \cE_3 = \der{\cL}{z} - \frac{\d}{\d t} \der{\cL}{\dot{z}}  + \cO(h) =  \der{\cL}{z} + \cO(h) . \]
Finally, we compute
\begin{align*}
    \cE_2 &= \frac{ h \left( \der{}{x} + \frac{1}{h} \der{}{\dot{x}} \right) L(x-h\dot{x}+\ldots,x,z-h\dot{z}+\ldots,z;h) }{ 1 - h \left( \der{}{z} + \frac{1}{h} \der{}{\dot{z}} \right) L(x-h\dot{x}+\ldots,x,z-h\dot{z}+\ldots,z;h) } \\
    &= \frac{h \left( \der{}{x} + \frac{1}{h} \der{}{\dot{x}} \right) \cL(x(t-h),\dot{x}(t-h),z(t-h))}{ 1 - h \left( \der{}{z} + \frac{1}{h} \der{}{\dot{z}} \right) \cL(x(t-h),\dot{x}(t-h),z(t-h)) } \\
    &= \frac{ \der{}{\dot{x}} \left( \cL - h \frac{\d\cL}{\d t} \right) }{ 1 - \der{}{\dot{z}} \left( \cL - h \frac{\d\cL}{\d t} \right)} \\
    &= \der{\cL}{\dot{x}} + \cO(h) .
\end{align*}
It follows that
\[ \cE_1 + \cE_2 \cE_3 = \der{\cL}{x} - \frac{\d}{\d t} \der{\cL}{\dot{x}} + \der{\cL}{\dot{x}} \der{\cL}{z} + \cO(h) . \]
The claimed result follows by isolating the term $\ddot{x}$ in the right hand side of this equation and the term $\frac{x(t-h) - 2 x(t) + x(t+h)}{h^2}$ in the left hand side.
\end{proof}

Now we turn our attention to the modified equations of the system
\begin{equation}\label{disc-system}
\begin{dcases}
\frac{z_{j+1} - z_{j}}{h} = L(x_j,x_{j+1},z_j,z_{j+1};h) \\
\frac{x_{j+1} - 2 x_j + x_{j-1} }{h^2} = F(x_{j-1},x_j,x_{j+1},z_{j-1},z_j,z_{j+1};h) .
\end{dcases}
\end{equation}
That is, we look for differential equations whose solutions interpolate solutions of the difference equations. The precise definition of a modified equation is a bit more involved because of convergence issues.

\begin{definition}
The \emph{system of modified equations} for the difference system \eqref{disc-system} is defined by the formal expressions
\begin{equation}\label{mod-eqn}
\begin{dcases}
\dot{z} = \cL_{\mod}(x,\dot{x},z,h) = \cL(x,\dot{x},z) + h \cL_1(x,\dot{x},z) + h^2 \cL_2(x,\dot{x},z) + \ldots \\
\ddot{x} = f_{\mod}(x,\dot{x},z;h)  = f(x,\dot{x},z) + h f_1(x,\dot{x},z) + h^2 f_2(x,\dot{x},z) + \ldots
\end{dcases} 
\end{equation}
such that for any $k \in \N$, every solution $(x,z)$ of the truncated differential equations
\begin{equation}\label{mod-eqn-trunc}
\begin{dcases}
\dot{z} = \cL(x,\dot{x},z) + h \cL_1(x,\dot{x},z) + \ldots + h^k \cL_k(x,\dot{x},z)\\
\ddot{x} = f(x,\dot{x},z) + h f_1(x,\dot{x},z) + \ldots + h^k f_k(x,\dot{x},z)
\end{dcases}
\end{equation}
satisfies the difference equations with a defect of order $k+1$, in the sense that
\[ \begin{dcases}
\frac{z(t+h) - z(t)}{h} = L(x(t),x(t+h),z(t),z(t+h);h) + \cO(h^{k+1}) \\
\frac{x(t+h) - 2 x(t) + x(t-h) }{h^2} = F(x(t-h),x(t),x(t+h),z(t-h),z(t),z(t+h);h) \\
\hspace{10.7cm}+ \cO(h^{k+1}) .
\end{dcases} \]
\end{definition}
Given a difference equation of the form \eqref{2nd-order} one can recursively compute the coefficients $\cL_i$ and $f_i$ of the system of modified equations. Examples of such calculations can be found for example in \cite[Chapter IX]{hairer2006geometric} and \cite{vermeeren2017modified}. Note that in the leading order of the modified equations we recover the original differential equations. This is because we are dealing with consistent discretizations. The additional terms of the power series contain information about the integrator. In particular, the order of an integrator is the smallest $k > 0$ such that the $h^k$-term in Equation \eqref{mod-eqn} is non-zero.

If we look at the difference equation for $z$ by itself, i.e.\@ with an arbitrary smooth curve $x$ instead of one interpolating a discrete solution, we get a different modified equation, depending on higher derivatives of $x$:
\begin{equation}\label{mod-eqn-z}
\begin{split}
\dot{z} &= \cL_{\mod}^z(x,\dot{x},\ddot{x},\ldots,z,h) \\
&= \cL(x,\dot{x},z) + h \cL_1^z(x,\dot{x},\ddot{x},\ldots,z) + h^2 \cL_2^z(x,\dot{x},\ddot{x},\ldots,z) + \ldots
\end{split}
\end{equation}
such that for any $k \in \N$ and any smooth curve $x$, every solution $z$ of the truncated differential equation
\begin{equation}\label{mod-eqn-trunc-z}
\dot{z} = \cL(x,\dot{x},z) + h \cL_1^z(x,\dot{x},\ddot{x},\ldots,z) + \ldots + h^k \cL_k^z(x,\dot{x},\ddot{x},\ldots,z)
\end{equation}
satisfies 
\[ \frac{z(t+h) - z(t)}{h} = L(x(t),x(t+h),z(t),z(t+h);h) + \cO(h^{k+1}). \]

In the system of modified equations \eqref{mod-eqn}, $\cL_{\mod}(x,\dot{x},z,h)$ is the Herglotz Lagrangian for $\ddot{x} = f_{\mod}(x,\dot{x},z;h)$ in the following sense:
\begin{theorem}\label{thm-modified}
A truncation after order $h^k$ of the power series $\cL_{\mod}(x,\dot{x},z,h)$ yields as its generalized Euler-Lagrange equations $\ddot{x} = f_{\mod}(x,\dot{x},z;h) + \cO(h^{k+1})$.
\end{theorem}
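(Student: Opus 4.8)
The plan is to route the computation through the auxiliary modified Lagrangian $\cL_{\mod}^z$ of Equation \eqref{mod-eqn-z}, following the strategy used for classical variational integrators in \cite{vermeeren2017modified}. The starting point is that $\cL_{\mod}$ and $\cL_{\mod}^z$ encode the same modified $z$-dynamics but from different viewpoints: $\cL_{\mod}^z$ reproduces the discrete update $z(t+h)-z(t)=hL$ for an \emph{arbitrary} smooth curve $x$, whereas $\cL_{\mod}$ from Equation \eqref{mod-eqn} does so only along solutions of the modified equation $\ddot{x}=f_{\mod}$. First I would make precise that, as a consequence, substituting $\ddot{x}=f_{\mod}$ together with its differentiated versions (for $\dddot{x}$ and the higher derivatives) into $\cL_{\mod}^z$ to eliminate every derivative of order at least two recovers $\cL_{\mod}$ as a formal power series in $h$. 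This is essentially a matter of unwinding definitions; the only point worth recording is that the dependence of $\cL_{\mod}^z$ on the higher derivatives enters at order $h$ and beyond, since its leading term is $\cL(x,\dot{x},z)$.

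Next I would show that the continuous Herglotz variational principle for $\cL_{\mod}^z$ is equivalent, order by order in $h$, to the discrete Herglotz variational principle for $L$. By construction $\dot{z}=\cL_{\mod}^z$ matches the discrete $z$-update for every smooth $x$, so the continuous action $z(T)$ and the discrete action $z_N$ coincide up to $\cO(h^{k+1})$ as functionals of $x$. Criticality under variations of $x$ therefore selects the same curves, which means that the (higher order) generalized Euler-Lagrange equation of $\cL_{\mod}^z$ is a modified equation for the discrete generalized Euler-Lagrange equation \eqref{dgEL}. By Proposition \ref{prop-consistent}, extended to all orders in $h$ (which is precisely how $f_{\mod}$ is defined through the system \eqref{disc-system}), this higher order equation reduces on its own solutions to $\ddot{x}=f_{\mod}+\cO(h^{k+1})$.

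The crux, and the step I expect to be the main obstacle, is a reduction-of-order lemma linking the two Lagrangians: the generalized Euler-Lagrange expression of $\cL_{\mod}$ should agree, up to terms proportional to $\ddot{x}-f_{\mod}$ and its derivatives, with the generalized Euler-Lagrange expression of $\cL_{\mod}^z$ after the modified equation has been substituted for the higher derivatives. The mechanism is the familiar one that varying a Lagrangian before versus after substituting a relation differs only by the derivative of the Lagrangian in the substituted slot times the variation of that relation; here the relation being substituted is the equation of motion itself, so the discrepancy is proportional to $\ddot{x}-f_{\mod}$, which vanishes to order $\cO(h^{k+1})$ along solutions of the truncated modified equation. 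Two features specific to the Herglotz setting make this delicate: the generalized Euler-Lagrange operator is nonlinear in $\cL$ through the term $\der{\cL}{z}\der{\cL}{\dot{x}}$, so the computation does not split linearly over the power series, and one must verify that the higher-derivative corrections, which carry explicit powers of $h$, keep the defect at the advertised order $h^{k+1}$. Once the lemma is established, combining it with the two previous steps shows that
\[ \der{\cL_{\mod}}{x} - \frac{\d}{\d t}\der{\cL_{\mod}}{\dot{x}} + \der{\cL_{\mod}}{z}\der{\cL_{\mod}}{\dot{x}} = 0 \]
is equivalent to $\ddot{x}=f_{\mod}+\cO(h^{k+1})$, which is the assertion of the theorem.
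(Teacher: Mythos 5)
Your overall route is the same as the paper's: pass through the higher-derivative modified Lagrangian $\cL_{\mod}^z$ of Equation \eqref{mod-eqn-z}, transfer criticality from the discrete Herglotz principle to the continuous one for $\cL_{\mod}^z$ up to $\cO(h^{k+1})$, and then eliminate the higher derivatives to land on $\cL_{\mod}$. You also correctly identified where the difficulty sits. But the mechanism you propose for the crux step is wrong. The difference between varying $\cL_{\mod}^z$ before and after substituting the modified equation is
\[ \sum_{\ell \geq 2} \der{\cL_{\mod}^z}{x^{(\ell)}}\, \delta\!\left( x^{(\ell)} - \frac{\d^{\ell-2}}{\d t^{\ell-2}} f_{\mod} \right), \]
and on solutions the bracket vanishes but its \emph{variation} does not: $\delta(\ddot{x}-f_{\mod}) \neq 0$ even where $\ddot{x}-f_{\mod}=0$. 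After integration by parts, the residual discrepancy between the two generalized Euler--Lagrange expressions is built from $\der{\cL_{\mod}^z}{x^{(\ell)}}$ and its total time derivatives, with coefficients involving $\der{f_{\mod}}{x}$ and $\der{f_{\mod}}{\dot{x}}$ --- quantities that survive on-shell. So ``proportional to $\ddot{x}-f_{\mod}$, hence $\cO(h^{k+1})$ along solutions'' is not a valid conclusion, and your final equivalence does not follow from it.

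That this is a genuine failure and not a technicality is shown by a classical (i.e.\@ $z$-independent) example: take $\cL^z = \frac{1}{2}\dot{x}^2 - V(x) + \frac{h}{2}\ddot{x}^2$. Its Euler--Lagrange equation $\ddot{x} = -V'(x) + h\, x^{(4)}$ reduces on-shell to $\ddot{x} = -V' + h\left( V'V'' - V'''\dot{x}^2 \right) + \cO(h^2)$, whereas substituting $\ddot{x} = -V' + \cO(h)$ into the Lagrangian first gives $\tilde{\cL} = \frac{1}{2}\dot{x}^2 - V + \frac{h}{2}(V')^2$, whose Euler--Lagrange equation is $\ddot{x} = -V' + h\,V'V''$. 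The two disagree already at order $h$, even though by your mechanism the discrepancy ``should'' vanish on-shell. What actually rescues the theorem --- and what the paper's proof invokes --- is the estimate
\[ \der{\cL_{\mod}^z}{x^{(\ell)}} = \cO(h^{k+1}) \qquad \text{for all } \ell \geq 2, \]
a structural property of modified Lagrangians produced by variational integrators (in the counterexample above this fails, since $\der{\cL^z}{\ddot{x}} = h\ddot{x} = \cO(h)$). This property is obtained from the so-called meshed variational problem in \cite{vermeeren2017modified}, and carrying it over to the Herglotz setting is precisely the nontrivial content of the substitution step; your worry about the nonlinearity of the Herglotz operator through $\der{\cL}{z}\der{\cL}{\dot{x}}$ is legitimate but secondary to this missing estimate. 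Without it, your proof has a gap exactly at the step you flagged as the main obstacle.
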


\begin{proof}[Sketch of proof]
Let $(x(t),z(t))$ be any solution to the truncated system of modified equations \eqref{mod-eqn-trunc}. By definition of a modified equation, the discrete curve $(x_j,z_j)_{j\in \Z}$ defined by $x_j = x(jh)$ and $z_j = z(jh)$ satisfies the discrete system \eqref{disc-system} with a defect of order $\cO(h^{k+1})$. 

Now consider the action $z_N = \bar z(Nh)$ where $\bar z$ is a solution of the higher order Equation \eqref{mod-eqn-trunc-z}, with $x$ as before. The discrete Herglotz variational principle implies that $z_N$ is critical with respect to variations of $x(t)$, supported on the interval $(0,Nh)$, again up to a defect of order $\cO(h^{k+1})$. This means that $x(t)$ solves the continuous Herglotz problem for $\cL_{\mod}^z$ with the same defect. 

We want to prove the same property with $\cL_{\mod}$ instead of $\cL_{\mod}^z$. If we define $z$ by Equation \eqref{mod-eqn-trunc}, independent of second or higher derivatives of $x$, then it will only interpolate a discrete solution if $x$ solves the modified equation $\ddot{x} = f_{\mod}(x,\dot{x},z,h) + \cO(h^{k+1})$. Hence we do not have the freedom to take variations of $x$ as needed in the argument above.

To show that $\cL_{\mod}(x,\dot{x},z,h)$ is nevertheless a Lagrangian for the modified equation, we need to show that replacing higher derivatives of $x$ in $\cL_{\mod}^z(x,\dot{x},\ddot{x},\ldots,z,h)$ using the modified equation does not change its generalized Euler-Lagrange equations. Sufficient conditions for this are that 
\[ \frac{\cL_{\mod}^z}{x^{(\ell)}} =  \cO(h^{k+1}) \qquad \forall \ell \geq 2, \]
where $x^{(\ell)}$ denotes the $\ell$-th derivative of $x$. These conditions can be obtained from the so-called meshed variational problem, as explained in \cite{vermeeren2017modified}. Then it follows that $x(t)$ satisfies the Herglotz variational principle for the first order Lagrangian $\cL_{\mod}(x,\dot{x},z;h)$.
\end{proof}

\begin{example}[Example \ref{ex-harmonic} continued]\label{ex:ex1cont}
Let us calculate the first order approximation of the modified equations for our first discretization of the damped harmonic oscillator. Assume that $x$ is a solution to the modified equation. Then it satisfies Equation \eqref{ex1-dEL} when we replace $x_j$ by $x(t)$ and $x_{j \pm 1}$ by $x(t \pm h)$:
\[ \frac{x(t+h) - 2 x(t) + x(t-h) }{h^2} = - x(t) - \alpha \left( \frac{x(t) - x(t-h)}{h} - \frac{h}{2} x(t) \right) . \]
A Taylor expansion gives
\[ \ddot{x} = - x - \alpha \left( \dot{x} - \frac{h}{2} \ddot{x} - \frac{h}{2} x \right) + \cO(h^2), \]
where all instances of $x$ and its derivatives are evaluated at $t$. Since in the leading order of the modified equation we recover the original equation, we know that $\ddot{x} = - x - \alpha \dot{x} + \cO(h)$, which we can use to simplify the right hand side. We obtain 
\begin{equation}
     \ddot{x} = - x - \alpha \dot{x} - \frac{h \alpha^2}{2} \dot{x} + \cO(h^2) . \label{ex2-modEL}
\end{equation}

Using the same procedure we calculate the modified equation for $\frac{z_{j+1} - z_j}{h} = L$, with $L$ given by \eqref{ex1-Lag}. In terms of the interpolating functions, the difference equation reads
\[ \frac{z(t+h) - z(t)}{h} =  \frac{1}{2} \left( \frac{x(t+h) - x(t)}{h}\right)^2 - \frac{1}{4} \left( x(t)^2 + x(t+h)^2 \right) - \alpha z(t). \]
and its Taylor expansion is
\[ \dot{z} + \frac{h}{2} \ddot{z} = \frac{1}{2} \left( \dot{x} + \frac{h}{2} \ddot{x} \right)^2 - \frac{1}{4} x^2 - \frac{1}{4}\left( x + h \dot{x} \right)^2 - \alpha z + \cO(h^2). \]
Solving this for $\dot{z}$ we find
\[ \dot{z} = \frac{1}{2} \dot{x}^2 - \frac{1}{2} x^2 - \alpha z + \frac{h}{2}\left( \dot{x}\ddot{x} - x \dot{x} - \ddot{z} \right) + \cO(h^2). \]
In the right hand side we replace $\ddot{z}$ using the leading order equation
\[ \ddot{z} = \frac{\d}{\d t}\left( \frac{1}{2} \dot{x}^2 - \frac{1}{2} x^2 - \alpha z \right) + \cO(h) = \dot{x} \ddot{x} - x \dot{x} - \alpha \left( \frac{1}{2} \dot{x} - \frac{1}{2} x^2 - \alpha z \right) + \cO(h) \]
to find
\begin{equation} \label{ex2-modz}
\dot{z} = \frac{1}{2} \dot{x}^2 - \frac{1}{2} x^2 - \alpha z + \frac{h \alpha}{2}\left( \frac{1}{2} \dot{x} - \frac{1}{2} x^2 - \alpha z \right) + \cO(h^2).
\end{equation}
The right hand side of this modified equation should give us the modified Lagrangian. A simple calculation shows that the generalized Euler-Lagrange equation for 
\[ \cL_{\mod} = \frac{1}{2} \dot{x}^2 - \frac{1}{2} x^2 - \alpha z + \frac{h\alpha}{2} \left( \frac{1}{2} \dot{x}^2 - \frac{1}{2} x^2 - \alpha z \right) + \cO(h^2) \]
is indeed Equation \eqref{ex2-modEL}.
Note that up to the $\cO(h^2)$ error term, the modified Lagrangian $\cL_\mod$ is a rescaling of the original Lagrangian $\cL$. Unlike for the classical variational principle, this does not imply that both Lagrangians have the same generalized Euler-Lagrange equations.

The second order term of the modified equations can be calculated by including one more term in the Taylor expansions and simplifying the right hand sides using the first order modified equations \eqref{ex2-modEL}--\eqref{ex2-modz} instead of the leading order equations. We find
\begin{equation}
    \ddot{x} = - x - \alpha \dot{x} - \frac{h \alpha^2}{2} \dot{x} - \frac{h^2}{12} \left( \left( \alpha^2 + 1 \right) x + 4 \alpha^3 \dot{x} \right) + \cO(h^3)
\end{equation}
and
\begin{equation}
\begin{split}
\dot{z} &= \frac{1}{2} \dot{x}^2 - \frac{1}{2} x^2 - \alpha z + \frac{h \alpha}{2}\left( \frac{1}{2} \dot{x} - \frac{1}{2} x^2 - \alpha z \right) \\
&\quad + \frac{h^2}{24} \left( (4 \alpha^2 - 1) x^2 - (5 \alpha^2 - 2) \dot{x}^2 - 4 \alpha x \dot{x} + 8 \alpha^3 z \right) + \cO(h^3)
\end{split}
\end{equation}
This process can be continued to recursively find the modified equations to any order.
\end{example}

\begin{example}[Example \ref{ex-harmonic-symmetric}  continued]\label{ex:ex2cont}
We can repeat the above calculation to obtain a modified equation for the second order integrator too. We find
\begin{align*}
    \ddot{x} &= -x - \alpha \dot{x} - \frac{h^2}{12} \left( \alpha^3 \dot{x} + \alpha^2 x + x \right) + \cO(h^3) , \\
    \dot{z} &=  \frac{1}{2} \dot{x}^2 - \frac{1}{2} x^2 - \alpha z + \frac{h^2}{24} \left( (\alpha^2 - 1) x^2 - (2 \alpha^2 - 2) \dot{x}^2 - 4 \alpha x \dot{x} + 2 \alpha^3 z \right) + \cO(h^3) ,
\end{align*}
which shows that the discretization of Example \ref{ex-harmonic-symmetric} is indeed a second order method.
This is a consequence of the symmetry of the discretization:
\begin{align*}
    \frac{V(x(t)) + V(x(t+h))}{2} &= V \!\left(x \!\left(t+\frac{h}{2} \right)\right) + \cO(h^2) , \\
    \frac{x(t+h) - x(t)}{h} &= \dot{x} \!\left(t+\frac{h}{2} \right) + \cO(h^2) , \\
    \frac{z(t) + z(t+h)}{2} &= z \!\left(t+\frac{h}{2} \right) + \cO(h^2) .
\end{align*}
\end{example}

\section{On the explicit time dependence}\label{sec:tdep}

Even though in the previous sections we focused on Lagrangians that do not explicitly depend on time, going through the previous proofs and examples one can observe that there is no obstruction to considering explicitly time-dependent systems. In fact, when the system depends explicitly on time, the resulting flow yields a time-dependent contact transformation, in compete analogy to what happens with time-dependent canonical transformations in the symplectic case. We refer to~\cite{bravetti2017contact} for more on the theory of time-dependent contact transformations and their generating functions.
Therefore, with some efforts and modulo a slight complication of the notation in few instances, it is possible to extend all the previous results to such systems in a straightforward way, so that the resulting maps are discretizations of the corresponding time-dependent contact transformations.

How the explicitly time-dependent terms appear in the discrete Lagrangian and the resulting difference equation depend both on the choice of discretization and on the form of time-dependent terms in the continuous Lagrangian. In many cases, such as for external forcing, the time-dependence can be separated neatly and the final result will be elegant and readable.  To illustrate the time-dependent case, we build upon the Lagrangian presented in Example \ref{ex-harmonic} and Example \ref{ex-harmonic-symmetric}, and consider a forced damped harmonic oscillator. 

\begin{example}\label{ex-harmonic-time-dep}
In general, the Lagrangian of a mechanical system with Raileigh dissipation and external forcing $f(t)$ looks like
\[
\mathcal{L}(t, x, \dot{x}, z) = \frac12 \dot{x}^2 -V(x) -\alpha z + \boxed{f(t) x}.
\]
Indeed, the generalized Euler-Lagrange equation in this case is
\[
\ddot{x} = - V'(x) - \alpha \dot{x} + \boxed{f(t)}.
\]
Here and in what follows, we emphasize the difference with the equations from Example \ref{ex-harmonic-symmetric} in boxes.

Let $t_j = t_{j-1} + h = t_0 + jh$, then a natural discretization of the Lagrangian above is
\begin{equation}\label{Lag-forcing}
\begin{split}
L(t_j,t_{j+1},x_j, x_{j+1}, z_j, z_{j+1}) = & \frac{1}{2} \left( \frac{x_{j+1} - x_j}{h}\right)^2 - \frac{V(x_j) + V(x_{j+1})}{2} - \alpha \frac{z_j + z_{j+1}}2 \\
    & + \boxed{\frac{f(t_j) x_j + f(t_{j+1}) x_{j+1}}2}
\end{split}
\end{equation}
and the discrete generalized Euler-Lagrange equation reads
\begin{equation}\label{dEL-Forcing}
\begin{split}
\frac{x_{j+1} - 2 x_j + x_{j-1} }{h^2} &= - V'(x_j) + \boxed{f(t_j)} \\
&\quad - \frac{\alpha}{1 + \frac{h}2\alpha} \left( \frac{x_j - x_{j-1}}{h} - \frac{h}{2} V'(x_j) + \boxed{\frac{h}{2} f(t_j)}\right) .
\end{split}
\end{equation}
The position-momentum formulation of the integrator is
\begin{equation}
\begin{split}
x_j &= x_{j-1} + h\left(1 - \frac h2 \alpha\right) p_{j-1} - \frac{h^2}{2} V'(x_{j-1})
     + \boxed{\frac{h^2}2 f(t_{j-1})}, \\
p_j &= \frac{
\left(1 - \frac{h}{2} \alpha\right) p_{j-1} - \frac{h}{2} ( V'(x_j) + V'(x_{j-1}) )
     + \boxed{\textstyle \frac{h}{2} \big(f(t_j) + f(t_{j-1})\big)}
     }{1 + \frac h2\alpha}.
\end{split}
\end{equation}
\end{example}

\section{Numerical results}\label{sec:num}

In this section we discuss the behaviour of our contact variational integrators in comparison with some classical fixed step methods.
In what follows we consider the damped harmonic oscillator with and without forcing, integrated using:
\begin{itemize}
    \item our contact variational integrators of both first and second order as presented in the previous examples Example \ref{ex-harmonic} and Example \ref{ex-harmonic-symmetric} (respectively denoted ``Contact (1st)'' and ``Contact (2nd)''),
    \item the symplectic second order Leapfrog (also known as St\"ormer-Verlet) \cite{hairer2003geometricnumerical, hairer2006geometric},
    \item the third order Ruth3 integrator \cite{CandyRozmus1991,ruth1983},
    \item a second order variational but non-contact (VNC) method for forced Lagrangian systems \cite{dediego2018variational} obtained by a Verlet discretization of a Lagrangian in duplicated phase space \cite{galley2013classical}, 
    \[ \frac{x_{j+2} - 2 x_{j+1} + x_{j}}{h^2} + \alpha \frac{x_{j+2} - x_{j}}{2h} + V'(x_{j_1})  = 0,\]
    and
    \item the fourth order Runge-Kutta integrator (RK4) \cite{Lambert:1991}.
\end{itemize}

For the comparison with the damped oscillator with forcing, the symplectic methods are extended in a natural way by additionally adding the forcing term when evaluating the acceleration component in each step.

The error plots in Figures \ref{fig:dsmall}--\ref{fig:flarge} show a regularised relative error computed as follows: if $x_i$ denotes the value of the exact solution at time $t_i$ and $x^*_i$ is the corresponding value of the approximate solution, $\mathrm{err}_i = \frac{10 + x^*_i}{10 + x_i} - 1$.

The simulations have been performed in \texttt{python}, with support from the \texttt{scipy}, \texttt{numpy} and \texttt{matplotlib} libraries.
The plots have been generated using \texttt{matplotlib}, with a style imported from the \texttt{seaborn} library.
All code is released with an MIT license and available from GitHub and Zenodo \cite{zenodoCode}.

In our numerical experiments the fourth order Runge-Kutta method shows an impressive level of accuracy, and at least for this example. The main reason for choosing our method over it is when guarantees of the geometric invariants are more important than the solution accuracy. In all cases under consideration, regardless of the size of the error, our first and second order contact integrators guarantee the conservation of the contact structure, unlike any of the other methods.

As already shown in Example \ref{ex-harmonic-symmetric}, the leapfrog method and our second order contact method are equivalent, except for the initialization of the momentum. 	If $x_j = x(jh)$ for some smooth interpolating curve $x$, then we have
\begin{align*}
    p_0 &= \dot{x}(0) + \frac{h^2}{6} x^{(3)}(0) + \cO(h^4) , \\
    \pi_0 &= \dot{x}(0) + \frac{h^2}{6} x^{(3)}(0) + \frac{h^2}{4} \alpha \ddot{x}(0) + \cO(h^4) ,
\end{align*}
hence if $x^{(3)}(0)$ and $\alpha \ddot{x}(0)$ have the same sign, then it is best to initialize with $p$, i.e.~use the contact method. If they are of opposite sign, which is very likely for overdamped systems, the leapfrog method will be better.

Furthermore, in the limit $\alpha \to 0$, i.e.\@ in the limit of the system becoming symplectic, both the integrators presented in Example \ref{ex-harmonic} and \ref{ex-harmonic-symmetric} converge to the same symplectic leapfrog scheme. The same is true for the time-dependent case of Example \ref{ex-harmonic-time-dep}. Thus for small values of $\alpha$ we a priori expect our integrator to be on par with the leapfrog integrator, and in general perform worse than the third order Ruth3 integrator. As can be seen from Figures \ref{fig:dsmall} and \ref{fig:fsmall}, for $\alpha=0.01$ this is indeed the case: the contact integrators are performing very similarly and Ruth3 performs much better.

One interesting fact, however, is that already for $\alpha=0.1$ our method outperforms the third order Ruth3 method. We believe that the reason for this is that the Ruth3 method is only symplectic for separable Hamiltonians, i.e. if $\dot{p} = f(q)$ and $\dot{q} = g(p)$, whereas with damping, the acceleration depends also on $p$.

Similarly, even though the contact integrator is outperforming the VNC integrator in all the simulations, we see that for $\alpha=0.01$ the VNC integrator and our contact integrators have similar performances, but the contact integrator is performing much better when $\alpha$ increases. In this case, the lack of separability does not explain the difference in performance, and we are left to believe that it is the preservation of the contact structure that makes the difference.

\section{Conclusions}
In this work we have begun a thorough investigation of geometric numerical integrators for contact flows. Contrary to~\cite{Feng2010}, our approach is variational: we discretize Herglotz' variational principle and obtain the discrete version of the generalized Euler-Lagrange equations (see Theorem~\ref{thm-dgEL}).
Furthermore, in Theorems~\ref{thm-contact} and~\ref{thm-inverse} we have proved that the discrete map thus obtained is contact and that any geometric integrator for contact flows must be of this (variational) type.

In Theorem~\ref{thm-modified} we presented a formal backward error analysis for contact variational integrators, showing that the numerical solutions are interpolated by contact flows.

Finally, we have considered the implementation of the first and second order contact integrators for the benchmark example of a damped harmonic oscillator, both with and without external forcing. Our numerical experiments show that contact variational integrators in general are comparable with both symplectic and variational but non-contact (VNC) methods for forced Lagrangian systems of the same order, but that in situations where the contact structure is more relevant (for instance, when the damping increases), they usually outperform them.

Motivated by the results of this work, we expect to extend our analysis in multiple directions.
On the one hand, we plan to derive higher order analogues of the first and second order contact methods presented here.
On the other hand, we would like to
compare our approach with the purely Hamiltonian integrators put forward in~\cite{Feng2010} in a number of systems.
With this future work in mind, we expect that the implementation of contact integrators will be beneficial for the study of a wide range of applications where dissipation plays a central role.

\begin{figure}[p]
    \centering
    \includegraphics[trim=100 0 100 50, clip, width=\linewidth]{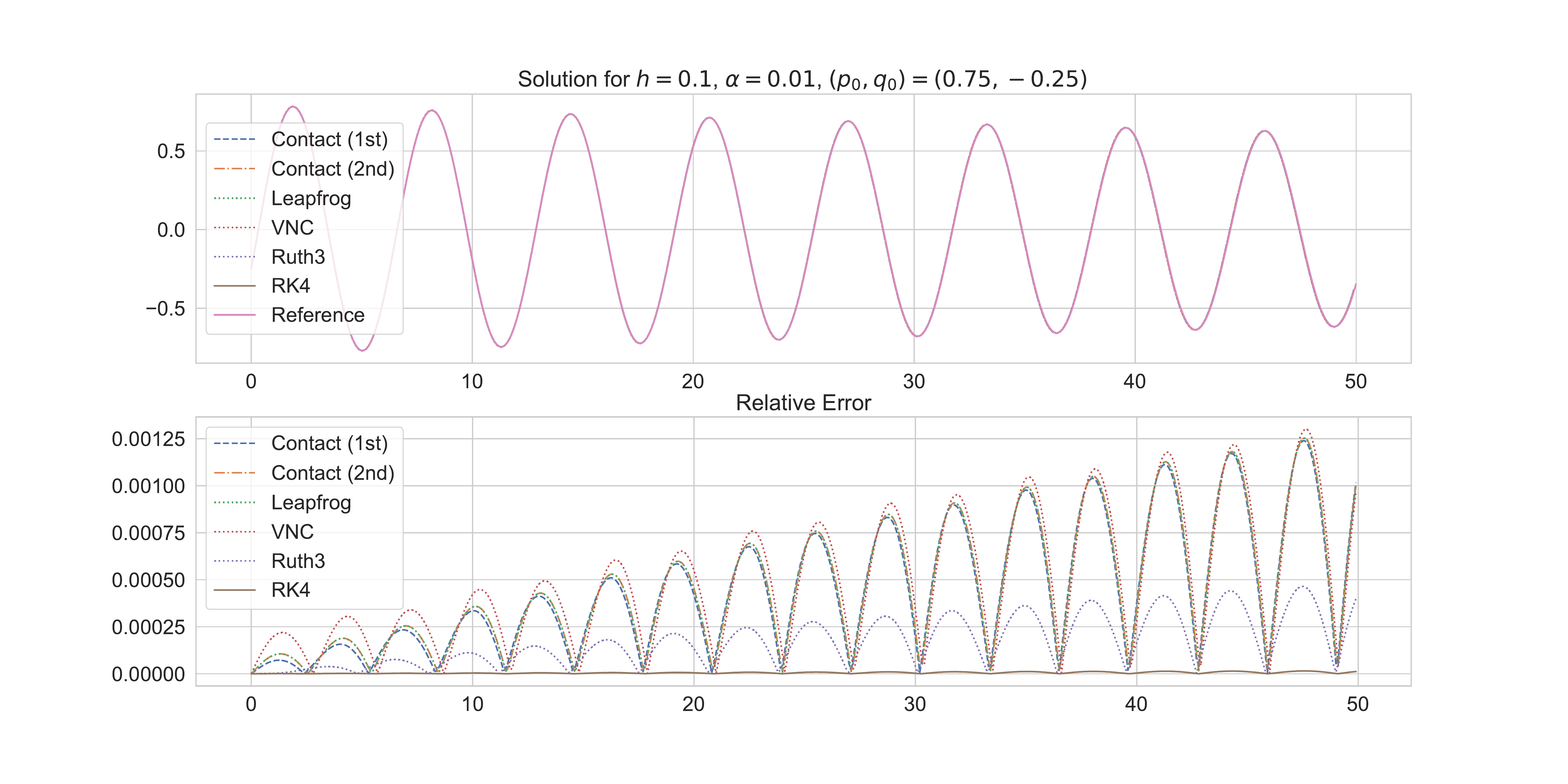}
    \includegraphics[trim=100 50 100 50, clip, width=\linewidth]{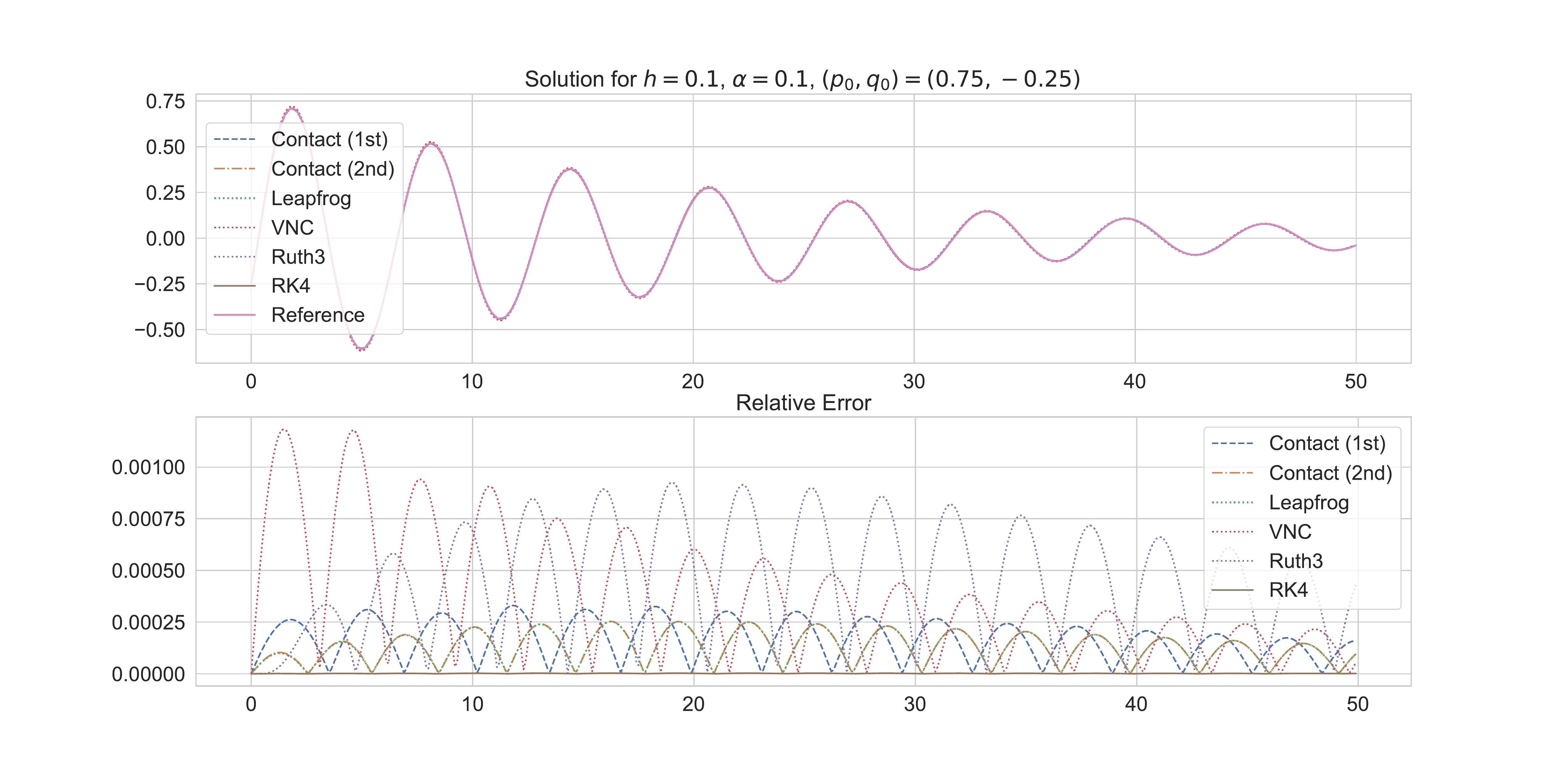}
    \caption{Damped oscillator: comparison to symplectic integrators for small damping parameter $\alpha$}
    \label{fig:dsmall}
\end{figure}

\begin{figure}[p]
    \centering
    \includegraphics[trim=100 50 100 50, clip, width=\linewidth]{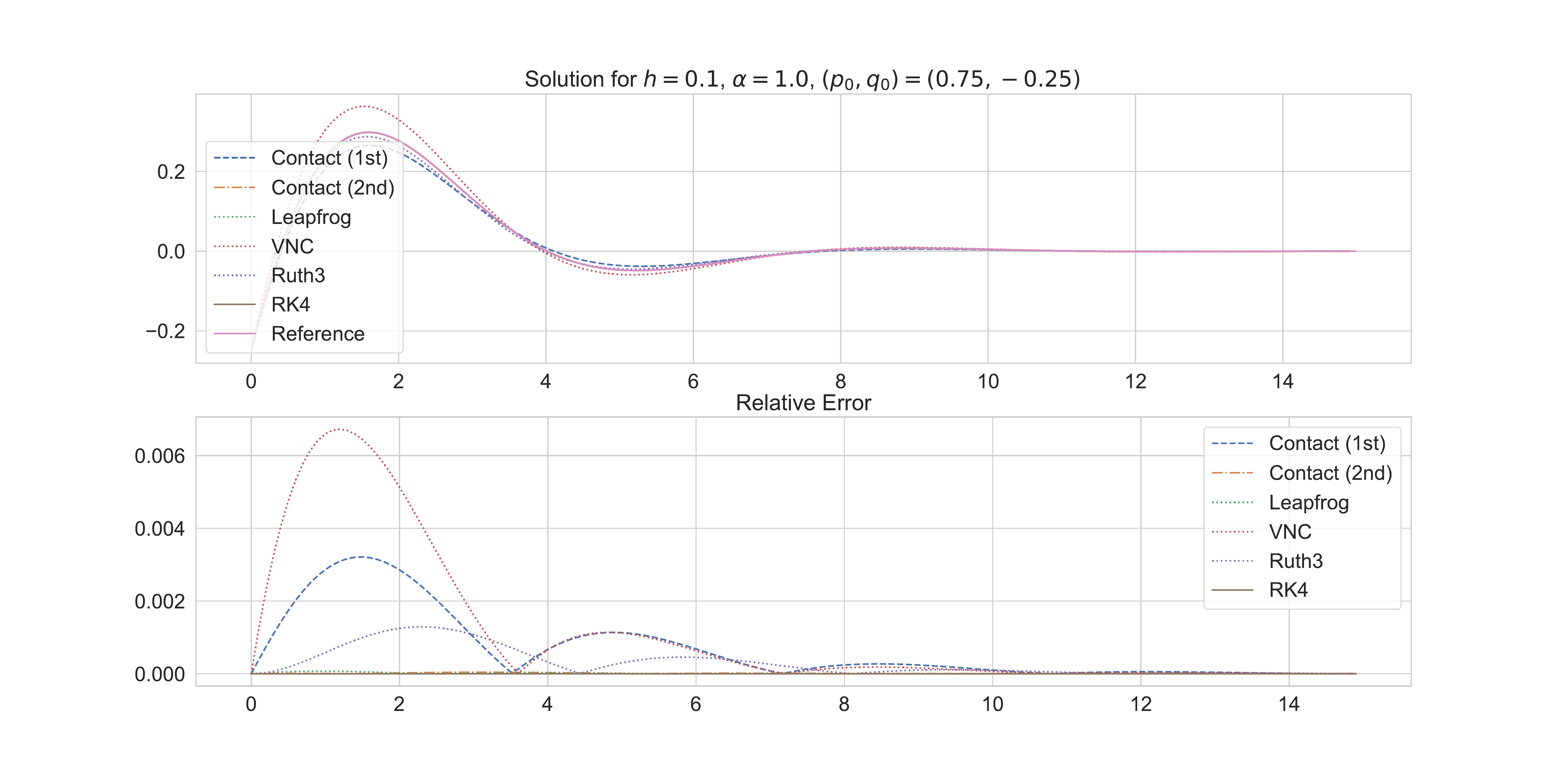}
    \caption{Damped oscillator: comparison to symplectic integrators for critical damping parameter $\alpha$}
    \label{fig:dcrit}
\end{figure}

\begin{figure}[p]
    \centering
    \includegraphics[trim=100 50 100 50, clip, width=\linewidth]{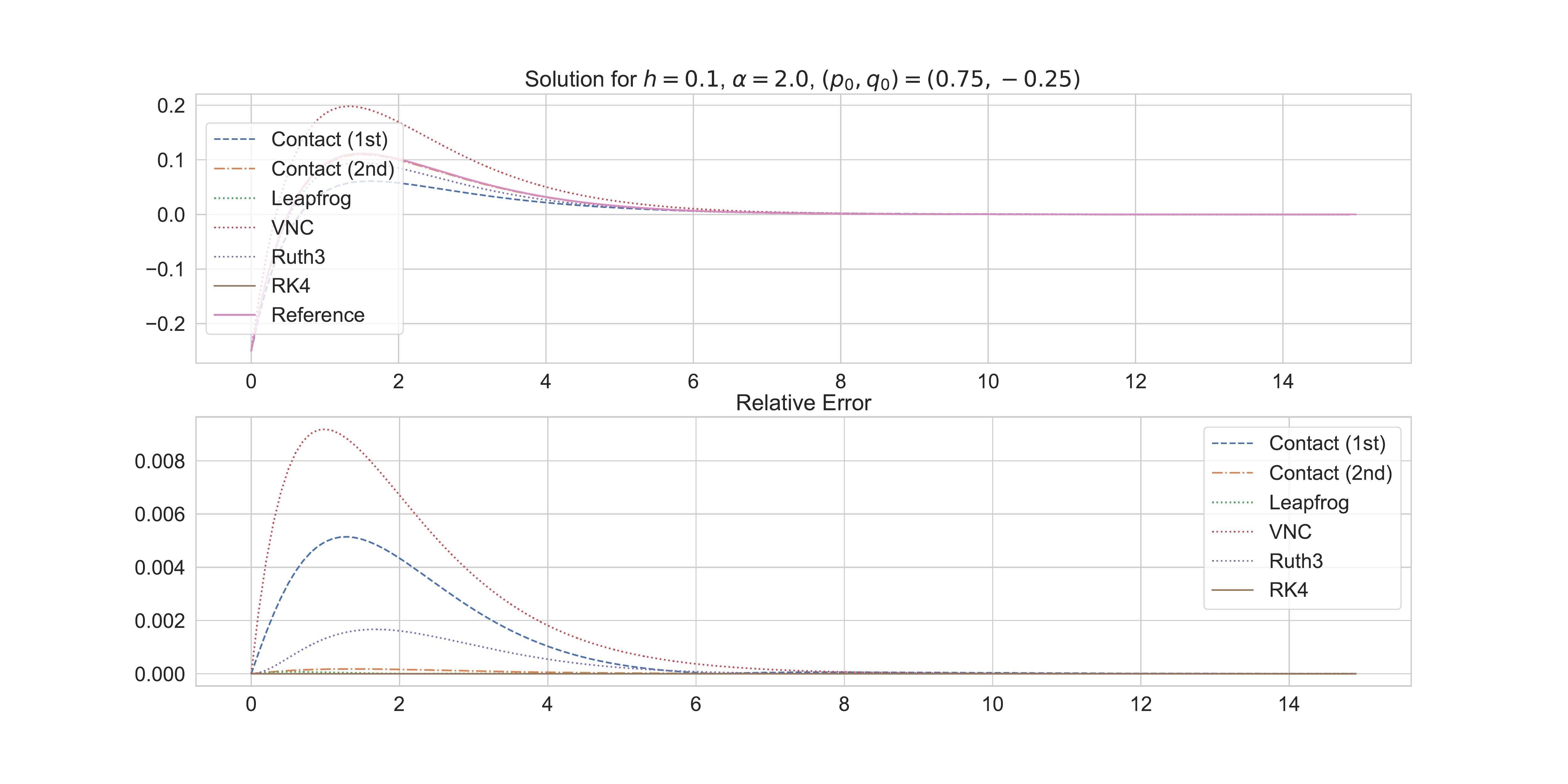}
    \caption{Damped oscillator: comparison to symplectic integrators for larger damping parameter $\alpha$}
    \label{fig:dlarge}
\end{figure}

\begin{figure}[p]
    \centering
    \includegraphics[trim=100 0 100 50, clip, width=\linewidth]{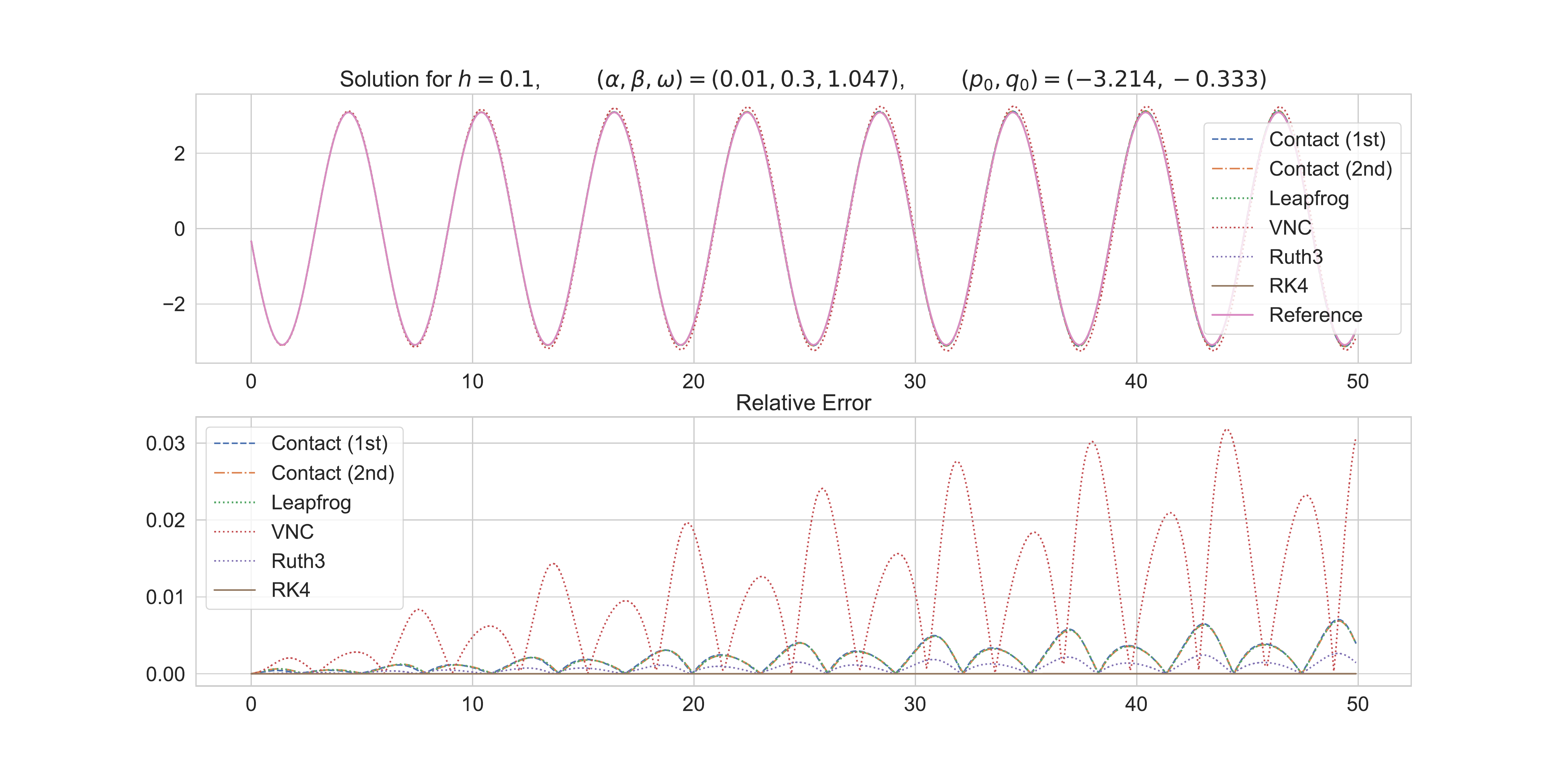}
    \includegraphics[trim=100 50 100 50, clip, width=\linewidth]{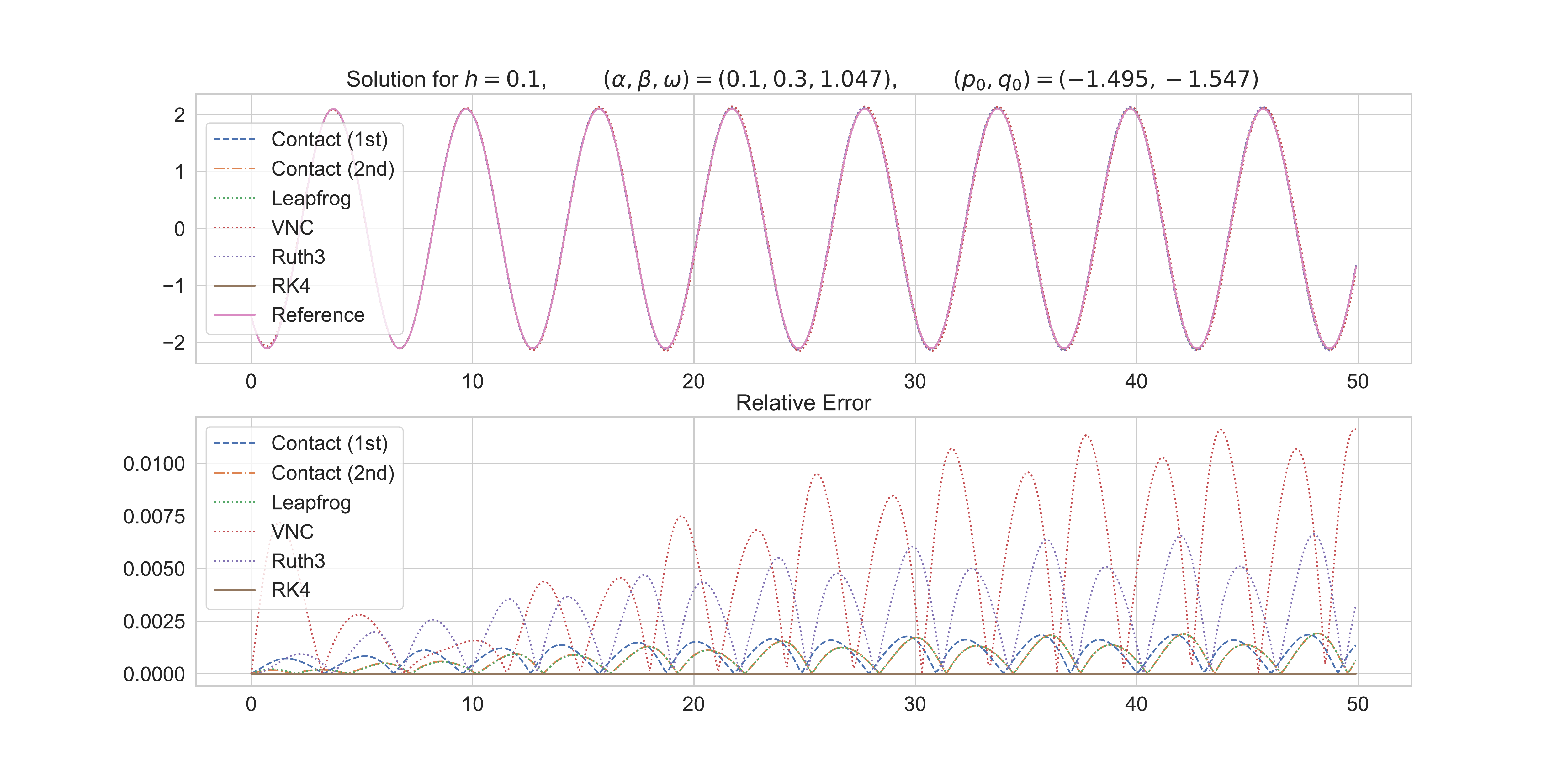}
    \caption{Forced oscillator: comparison to symplectic integrators for small damping parameter $\alpha$ and $f(t) = \beta \sin(\omega t)$}
    \label{fig:fsmall}
\end{figure}

\begin{figure}[p]
    \centering
    \includegraphics[trim=100 50 100 50, clip, width=\linewidth]{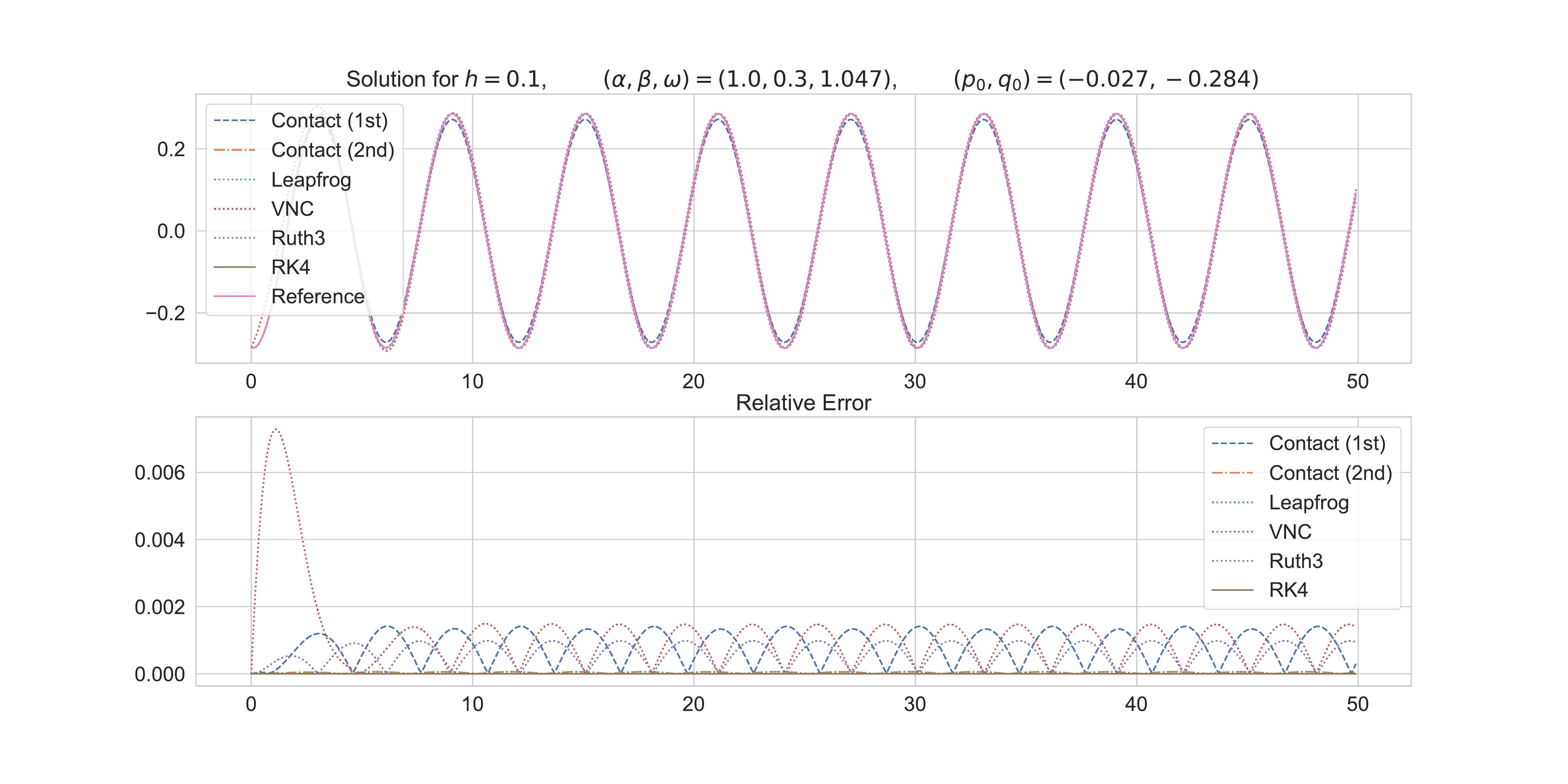}
    \caption{Forced oscillator: comparison to symplectic integrators for critical damping parameter $\alpha$ and $f(t) = \beta \sin(\omega t)$}
    \label{fig:fcrit}
\end{figure}

\begin{figure}[p]
    \centering
    \includegraphics[trim=100 50 100 50, clip, width=\linewidth]{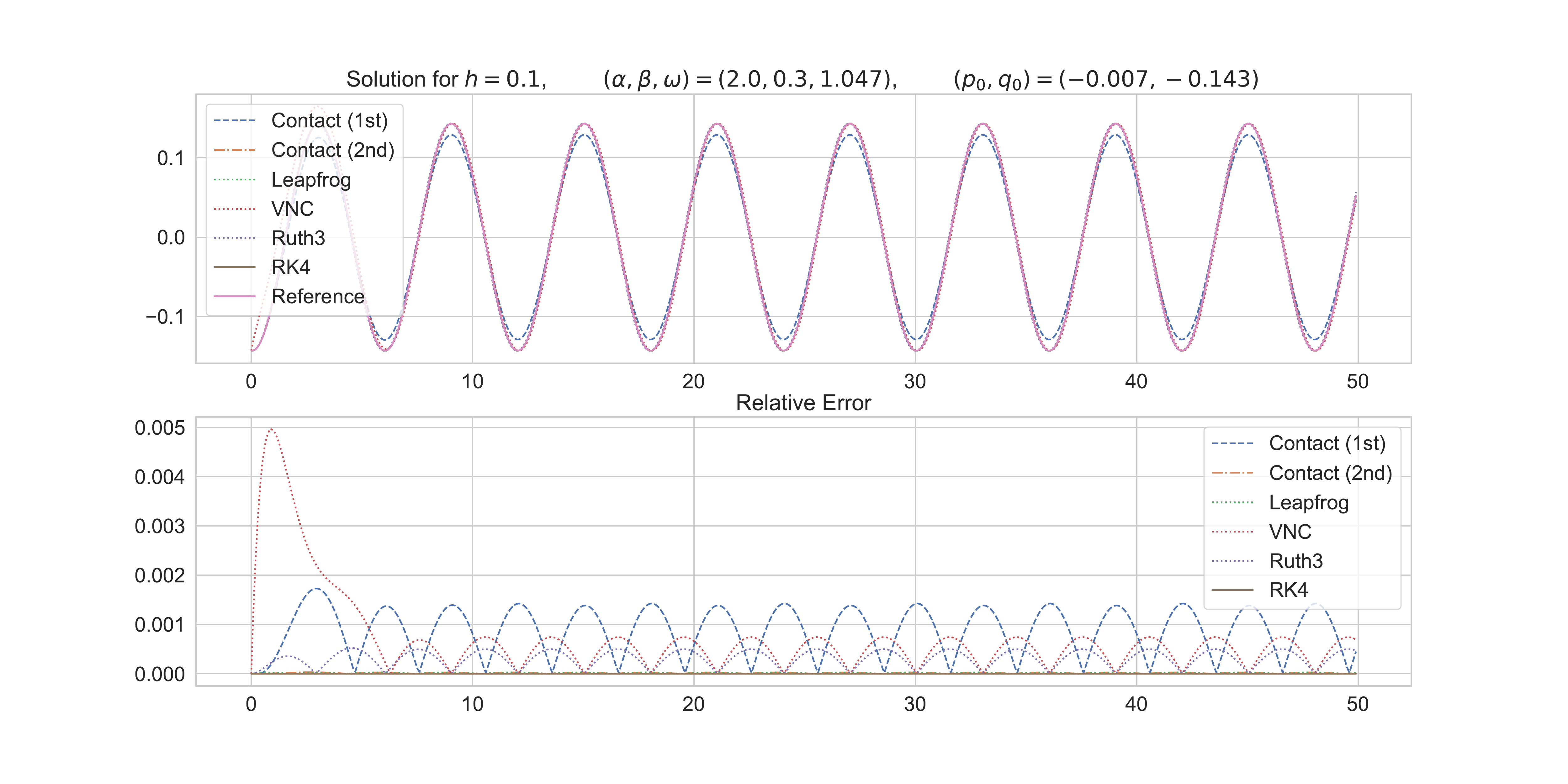}
    \caption{Forced oscillator: comparison to symplectic integrators for larger damping parameter $\alpha$ and $f(t) = \beta \sin(\omega t)$}
    \label{fig:flarge}
\end{figure}

\section*{Acknowledgements}
The authors would like to thank the organizers of the VI Iberoamerican Meeting on Geometry, Mechanics and Control, during which part of this work was initiated, and the NWO Visitor Travel Grant 040.11.698 that sponsored the visit of AB at the Bernoulli Institute. MV is funded by the SFB Transregio 109 ``Discretization in Geometry and Dynamics''. AB acknowledges FORDECYT (project number 265667) for financial support. MS research is supported by the NWO project 613.009.10. The authors would also like to thank the anonymous referees for useful comments that improved the final version of this paper.

\bibliographystyle{abbrvnat_mv}
\bibliography{contact_mv}
\end{document}